\documentclass[12pt]{article}

\usepackage{graphicx}

\usepackage{authblk}

\usepackage{tikz}
\usetikzlibrary{arrows,calc}

\tikzstyle{arc}=[->, shorten <=3pt, shorten >=3pt, >=stealth, line width=1.5pt]
\tikzstyle{edge}=[shorten <=2pt, shorten >=2pt, >=stealth, line width=1.5pt]
\tikzstyle{vertex}=[circle, fill=white, draw, minimum size=6pt, inner sep=0pt,
                    outer sep=0pt]
\tikzstyle{blackV}=[circle, fill=black, draw, minimum size=6pt, inner sep=0pt,
                    outer sep=0pt]

\usepackage{float}

\usepackage{xcolor}

\usepackage{amsmath}
\usepackage{amssymb}

\usepackage{amsthm}

\usepackage[capitalize]{cleveref}

\usepackage{multicol}

\usepackage[linesnumbered,ruled,vlined]{algorithm2e}

\usepackage{bbm}

\usepackage{fullpage}

\newtheorem{theorem}{Theorem}
\newtheorem{lemma}[theorem]{Lemma}
\newtheorem{corollary}[theorem]{Corollary}
\newtheorem{proposition}[theorem]{Proposition}
\newtheorem{problem}[theorem]{Problem}
\newtheorem{observation}[theorem]{Observation}

\usepackage{tikz}
\usetikzlibrary{arrows,shapes,matrix,decorations.pathmorphing}

\tikzstyle{arc}=[->,shorten <=3pt, shorten >=3pt,
                 >=stealth, line width=1.1pt]
\tikzstyle{edge}=[shorten <=2pt, shorten >=2pt,
                  >=stealth, line width=1.1pt]
\tikzstyle{vertex}=[circle, fill=white, draw,
                    minimum size=7pt,
                    inner sep=0pt, outer sep=0pt]

\usepackage{url}

\title{Orientations without forbidden patterns on three vertices%
\thanks{This research was supported by grants  UNAM-PAPIIT IA101423,
SEP-CONACYT A1-S-8397, and CONACYT FORDECYT-PRONACES/39570/2020.}}

\author[1]{Santiago Guzm\'an-Pro\thanks{sanguzpro@ciencias.unam.mx}}
\author[1]{C\'esar Hern\'andez-Cruz\thanks{chc@ciencias.unam.mx}}

\affil[2]{Facultad de Ciencias\\
          Universidad Nacional Aut\'onoma de M\'exico\\
          Av. Universidad 3000, Circuito Exterior S/N\\
          C.P. 04510, Ciudad Universitaria, CDMX, M\'exico}

\begin{document}
\date{}

\maketitle

\begin{abstract}
Given a set $F$ of oriented graphs, a graph $G$ is a $Forb_e(F)$-graph if it
admits an $F$-free orientation. Skrien showed that proper-circular arc graphs,
nested interval graphs and comparability graphs, correspond to $Forb_e(F)$-graph
classes for some set $F$ of orientations of $P_3$. Building on these results, we
exhibit the list of all $Forb_e(F)$-graph classes when $F$ is a set of oriented
graphs on three vertices. Structural characterizations for these classes are
provided, except for the so-called perfectly-orientable graphs and the
transitive-perfectly-orientable graphs, which remain as open problems.
\end{abstract}

\section{Introduction}
\label{sec:Introduction}

Given a set $F$ of oriented graphs, Skrien defined the class of $F$-graphs to
be the class of graphs that admit an $F$-free orientation \cite{skrienJGT6}. As
stated in \cite{guzmanEJC105}, we believe that this definition might be
misleading in the sense that the class of $F$-graphs is negatively defined with
respect to $F$. For this  reason, we propose to invert this definition.  Given a
class of oriented graphs $\mathcal{O}$, an \textit{$\mathcal{O}$-graph} is a
graph that admits an orientation that belongs to $\mathcal{O}$. In other words,
the class of $\mathcal{O}$-graphs is the family of underlying graphs of
$\mathcal{O}$. We denote this class of graphs by $\mathcal{U_O}$. For instance,
it is well-known that a graph $G$ is $2$-edge-connected if and only if it admits
a strongly connected orientation \cite{bondy2008}. So, if $\mathcal{O}$ is  the
class of strongly connected oriented graphs, then $\mathcal{U_O}$ is the class
of $2$-edge-connected graphs. 

Consider a pair of (oriented) graphs $G$ and $H$. If $G$ is homomorphic to $H$,
we will write $G\to H$, and if $G$ is an induced (oriented) subgraph of $H$, we
will write $G < H$. Given a set of oriented graphs, $F$, we denote by $Forb(F)$
the class of oriented graphs $G$ such that $H\not\to G$ for every $H\in F$. An
\textit{embedding} is a  homomorphism $\varphi\colon G\to H$ such that $G$ is
isomorphic to its image, $H[\varphi[V(G)]]$. So, $G$ embeds into $H$ if and only
if $G < H$. We extend the previously introduced notation and denote by
$Forb_e(F)$ the class of oriented graphs such that $H\not < G$ for every $H\in
F$. In particular, the class of $Forb_e(F)$-graphs is the class of graphs that
admit an $F$-free orientation. Notice that this class corresponds to the class
of $F$-graphs in the sense of Skrien~\cite{skrienJGT6}. We will often write
$Forb_e(H)$ instead of $Forb_e(\{H\})$.

Following Skrien's notation, we will use $B_1$, $B_2$, and $B_3$ to denote the
orientations of $P_3$, see Figure \ref{Fig:smallor}. Also in \cite{skrienJGT6},
Skrien proved structural characterizations of $Forb_e(F)$-graphs for every $F
\subseteq \{B_1,B_2,B_3\}$, except for $\{B_1\}$ and $\{B_2\}$; notice that
$Forb_e(B_1)$- and $Forb_e(B_2)$-graphs are actually the same class, known as
{\em perfectly-orientable graphs} (p.o.\ graphs for short). We say that a graph
is a \textit{transitive-perfectly-orientable graph} if it admits a
$\{B_1,\overrightarrow{C}_3\}$-free orientation, or equivalently, a
$\{B_2,\overrightarrow{C}_3\}$-free orientation. In other words, a
transitive-perfectly orientable graph is a graph that admits an orientation
where the out-neighbourhood of every vertex is a tournament, and every
tournament is transitively oriented. Clearly, this is a subclass of
perfectly-orientable graphs.

\begin{figure}[ht!]
\centering
\begin{tikzpicture}

\begin{scope}[yshift=3cm, scale=0.7]
\node [vertex] (1) at (-1.45,0){};
\node [vertex] (2) at (0,2){};
\node [vertex] (3) at (1.45,0){};
\node at (0,-1.2){($T_1+T_2$)};

\draw[arc]    (1)  edge  (3);
\end{scope}

\begin{scope}[xshift=-2cm, scale=0.7]
\node [vertex] (1) at (-1.45,0){};
\node [vertex] (2) at (0,2){};
\node [vertex] (3) at (1.45,0){};
\node at (0,-1.2){$\overrightarrow{C_3}$};

\draw[arc] (1) edge (2);
\draw[arc] (2) edge (3);
\draw[arc] (3) edge (1);
\end{scope}

\begin{scope}[xshift=2cm, scale=0.7]
\node [vertex] (1) at (-1.45,0){};
\node [vertex] (2) at (0,2){};
\node [vertex] (3) at (1.45,0){};
\node[] at (0,-1.2){$T_3$};

\draw [arc] (1) edge (2);
\draw [arc] (2) edge (3);
\draw [arc] (1) edge (3);
\end{scope}

\begin{scope}[xshift=-4cm, yshift=-3cm, scale=0.7]
\node [vertex] (1) at (-1.45,0){};
\node [vertex] (2) at (0,2){};
\node [vertex] (3) at (1.45,0){};
\node at (0,-1.2){$B_1$};

\draw [arc] (1) edge (2);
\draw [arc] (3) edge (2);
\end{scope}

\begin{scope}[yshift=-3cm, scale=0.7]
\node [vertex] (1) at (-1.45,0){};
\node [vertex] (2) at (0,2){};
\node [vertex] (3) at (1.45,0){};
\node at (0,-1.2){$B_2$};

\draw [arc] (2) edge (1);
\draw [arc] (2) edge (3);
\end{scope}

\begin{scope}[xshift=4cm, yshift=-3cm, scale=0.7]
\node [vertex] (1) at (-1.45,0){};
\node [vertex] (2) at (0,2){};
\node [vertex] (3) at (1.45,0){};
\node[] at (0,-1.2){$B_3$};

\draw [arc] (1) edge (2);
\draw [arc] (2) edge (3);
\end{scope}
\end{tikzpicture}
\caption{All possible orientations of non-empty graphs on three vertices.}
\label{Fig:smallor}
\end{figure}

Studying the structure of $B_1$-free orientable graphs has caught the interest
of several authors. In particular, Hartinger and Milanic, and the same authors
with Bre\v sar and Kos, have thoroughly studied this family in a series of
papers \cite{bresarDAM248, hartingerJGT2016, hartingerDM2017}. They have nice
results when the problem is restricted to certain families, e.g., they showed
that a cograph is perfectly orientable if and only if it is $K_{2,3}$-free.
Nonetheless, characterizing the class of perfectly orientable graphs through
forbidden induced subgraphs remains an open problem in the general case.

From the algorithmic point of view, Urrutia and Gavril found a polynomial time
algorithm to recognize perfectly orientable graphs \cite{urrutiaIPL41}.
Furthermore, in \cite{bangjensenJCTB59}, the authors show that for any subset
$F$ of $\{B_1,B_2,B_3\}$, there is a polynomial time algorithm to determine if a
graph admits an $F$-free orientation. They do so by reducing each of these
problems to $2$-SAT. Recall that in the classic article \cite{aspvallIPL8},
$2$-SAT is solved by proceeding on an auxiliary digraph constructed from the
$2$-SAT instance. By using these two techniques, we extend the aforementioned
result from \cite{bangjensenJCTB59} to any subset of $\{B_1, B_2, B_3, T_3\}$,
where $T_3$ is the transitive tournament of order $3$. Instead of reducing our
problem to $2$-SAT, we give an explicit construction of an auxiliary digraph
$D^+$. Then, we follow the same procedure used in \cite{aspvallIPL8} on $D^+$.
Thus, we show a certifying polynomial time algorithm to determine if a graph
belongs to $\mathcal{U}_{Forb_e(F)}$, for any set $F \subseteq \{ B_1, B_2, B_3,
T_3\}$.

In addition to the algorithm mentioned above, in this paper we extend Skrien's
work by proposing characterizations of $Forb_e(F)$-graphs when $F$ is any set of
oriented graphs on three vertices, except for $\{\overrightarrow{C_3},B_1\}$ and
$\{B_1\}$, where $\overrightarrow{C_3}$ denotes the directed $3$-cycle. Probably
the most interesting case is the family of $Forb_e(T_3)$-graphs, for which we
provide a characterization in terms of forbidden homomorphic images of a family
of graphs.   The characterization of $\mathcal{U}_{Forb_e(T_3)}$ results
surprisingly natural, and the obstructions are obtained by
``reverse-engineering'' the construction of the constraint digraph $D^+$. These
characterizations build up to our main result, which we now state.

\begin{theorem}
\label{thm:main}
  The following classes, and their intersections with complete multipartite
  graphs,  are all infinite families of $Forb_e(F)$-graphs, where $F$ is a
  set of non-empty oriented graphs on three vertices (d.u.o.\ stands for
  ``disjoint union of'').
  \begin{multicols}{2}
  \begin{enumerate}
    \item Perfectly orientable graphs.
    \item Comparability graphs. 
    \item Odd closed strip hom.-free graphs.
    \item D.u.o.\ proper circular-arc graphs. 
    \item Trivially perfect graphs. 
    \item Transitive-perfectly orientable graphs. 
    \item D.u.o.\ unicyclic graphs. 
    \item D.u.o\  triangle-free unicyclic graphs.
    \item $3$-colourable comparability graphs. 
    \item Triangle-free graphs.
    \item Clusters. 
    \item D.u.o.\ proper Helly circular-arc graphs.
    \item D.u.o.\ triangle-free proper circular-arc graphs.
    \item D.u.o.\ paths and cycles. 
    \item D.u.o.\ paths and cycles but no triangles.
    \item D.u.o.\ triangles and stars.
    \item Star forests.
    \item Stars and empty graphs.
    \item Matchings with isolated vertices.
    \item Empty graphs and $K_2$.
    \item Bipartite graphs. 
    \item Complete bipartite graphs. 
    \item Complete $3$-partite graphs. 
    \item $K_{2,3}$-free complete multipartite graphs.
    \item Complete multipartite graphs.
    \item All graphs.
  \end{enumerate}
  \end{multicols}
\end{theorem}

The rest of the paper is organized as follows. In Section \ref{sec:Algorithm},
we define the constraint digraph of a given graph $G$ and a set $F\subseteq\{
B_1, B_2, B_3, T_3\}$.   Then, we use this constraint digraph to present an
algorithm to recognize $Forb_e(F)$-graphs, when $F\subseteq\{ B_1, B_2, B_3,
T_3\}$. In Section~\ref{sec:small}, we characterize $Forb_e(F)$-graphs for most
of the cases not covered in \cite{skrienJGT6}. Section~\ref{sec:T3} is devoted
to characterize the class $\mathcal{U}_{Forb_e(T_3)}$, where we also use the
construction of the constraint digraph of Section~\ref{sec:Algorithm}. Finally,
in Section~\ref{sec:proof} we prove Theorem~\ref{thm:main}, and in
Section~\ref{sec:completemultipartite} we describe the intersections of complete
multipartite graphs with classes listed in Theorem~\ref{thm:main}. Conclusions
and some open problems are presented in Section~\ref{sec:conclusions}.

\section{Constraint Digraph}\label{sec:Algorithm}

We refer the reader to \cite{bangjensenDigraphs} and \cite{bondy2008} for
undefined basic terms. We denote the oriented graphs on three vertices as in
Figure~\ref{Fig:smallor}. Given a set $A$, we define $A \times 1 = A$ and $A
\times 0 = \varnothing$. For a statment $P$, we denote by $\mathbbm{1}_{[P]}$
the truth value of $P$. In other words, $\mathbbm{1}_{[P]}=1$ if $P$ is true,
and $\mathbbm{1}_{[P]}=0$ otherwise.

We say that any set $F \subseteq \{B_1, B_2, B_3, T_3\}$ is a {\em simple set}.
For a graph $G$ and a simple set $F$. We construct the {\em constraint digraph}
$D^+$ associated to $G$ and $F$ as follows. The vertex set, $V^+$, of $D^+$  is
the set $\{(x,y) \colon xy \in E_G\}$; notice that for every edge $xy \in E_G$,
both $(x,y)$ and $(y,x)$ belong to $V^+$. We define the following sets of arcs:
\begin{itemize}
    \item $A_1 = \{ ((y,x),(z,y)) \colon xy
      \in E_G,yz \in E_G, zx \notin E_G \}$,

    \item $A_2 = \{ ((x,y),(y,z)) \colon xy
      \in E_G,yz \in E_G, zx \notin E_G \}$,

    \item $A_3 = \{ ((x,y),(z,y)) \colon xy
      \in E_G,yz \in E_G, zx \notin E_G \}
      \cup \{ ((y,x),(y,z))$ $\colon xy \in
      E_G, yz \in E_G, zx \notin E_G \}$, and

    \item $A_t = \{ ((x,y),(y,z)) \colon xy
      \in E_G, yz \in E_G, zx \in E_G \}
      \cup \{ ((y,x),(x,z)) \colon xy \in
      E_G, yz \in E_G, zx \in E_G \}$.
\end{itemize}
Finally, we define the arc set, $A^+$, of
$D^+$ as
\[
  A^+ = (A_1 \times \mathbbm{1}_{[B_1 \in
  \mathcal{F}]}) \cup (A_2 \times \mathbbm{1}_{
  [B_2 \in \mathcal{F}]}) \cup( A_3 \times
  \mathbbm{1}_{[B_3 \in \mathcal{F}]}) \cup
  (A_t \times \mathbbm{1}_{[T_3 \in \mathcal{F}]}).
\]

In the rest of this section we will use the constraint digraph for a recognition
algorithm of certain families of $Forb_e(F)$-graphs.  We will also use $D^+$ in
Section~\ref{sec:T3} to find a structural characterization of
$\mathcal{U}_{Forb_e(T_3)}$.

We proceed to present the recognition algorithm. Given an input set $F\subseteq
\{ B_1,B_2,B_3,T_3 \}$ and a graph $G$, this algorithm finds an $F$-free
orientation of $G$, or outputs that it is not possible to find one.  We begin by
observing some properties of the constraint digraph $D^+$.

\begin{proposition}
\label{reversedarc}
  Let $G$ be a graph and $F \subseteq \{ B_1, B_2, B_3, T_3 \}$. Then, in $D^+$,
  $(x,y) \to (z,w)$ if and only if $(w,z) \to (y,x)$.
\end{proposition}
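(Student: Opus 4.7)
The plan is to argue that it suffices to prove only the forward implication: if $(x,y)\to(z,w)$ in $D^+$, then $(w,z)\to(y,x)$ in $D^+$; applying this statement to the new arc $(w,z)\to(y,x)$ gives back $(x,y)\to(z,w)$, which yields the converse. Since $A^+$ is a union of (possibly empty) pieces $A_1, A_2, A_3, A_t$, and the indicators $\mathbbm{1}_{[\cdot]}$ depend only on $F$, it is enough to show that each piece $A_i$ is closed under the map $(x,y)\to(z,w) \mapsto (w,z)\to(y,x)$. Thus the proof reduces to a case analysis on which of the four arc types contains the given arc.

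For each arc type I would exhibit the relabeling of $G$'s vertices that turns the defining pattern of $(x,y)\to(z,w)$ into the defining pattern of $(w,z)\to(y,x)$, and observe that the three edge-conditions defining $A_i$ are symmetric under this relabeling. For example, an arc in $A_2$ has the form $(x,y)\to(y,z)$ with $xy, yz \in E_G$ and $zx\notin E_G$; its reverse is $(z,y)\to(y,x)$, and after the substitution $(x,y,z)\mapsto(z,y,x)$ the three conditions become $zy\in E_G$, $yx\in E_G$, and $xz\notin E_G$, which match the $A_2$ pattern verbatim. The $A_1$ case is completely analogous, using the substitution that swaps the two endpoints of the missing edge.

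For $A_3$ and $A_t$, each piece is a union of two ``twin'' patterns; here the reverse of an arc of the first type lands in the second type and vice versa, so the argument is again just a substitution on labels. The main (only) obstacle is bookkeeping: one must pick the correct permutation of $\{x,y,z\}$ in each of the six defining patterns so that the triangle condition ($zx\in E_G$ for $A_t$) or the induced-$P_3$ condition ($zx\notin E_G$ for $A_1$, $A_2$, $A_3$) is preserved. Once the right substitution is written down, the verification is routine.
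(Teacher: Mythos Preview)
Your proposal is correct and follows essentially the same approach as the paper's proof: reduce to one implication by symmetry, then verify case by case that each $A_i$ is closed under the reversal map $((x,y),(z,w)) \mapsto ((w,z),(y,x))$; the paper just picks $A_1$ as its worked example where you pick $A_2$. One minor bookkeeping slip: for $A_t$ each of the two sub-patterns is in fact self-closed under reversal (e.g.\ the reverse of $((x,y),(y,z))$ is $((z,y),(y,x))$, which matches the first pattern again with $(x,y,z)\mapsto(z,y,x)$) rather than swapping into one another, but this does not affect the overall argument.
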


\begin{proof}
Proving one implication is enough to prove the whole statement. Observe that
$((x,y),(z,w)) \in A^+$ if and only if $((x,y),(z,w)) \in A_i$ for each $i \in
\{ 1,2,3,t \}$. We will prove the statement for the case when $((x,y),(z,w)) \in
A_1$, the other cases follow the same line of argumentation.  If
$((x,y),(z,w))\in A_1$ then $w = x$, $yx \in E_G$, $xz \in E_G$ and $zy \notin
E_G$. Thus $zx \in E_G$, $xy \in E_G$ and $yz \notin E_G$, therefore
$((x,z),(y,x)) \in A_1$. Hence, $((w,z), (y,x))\in A_1$ if and only if
$((x,y),(z,w)) \in A_1$.
\end{proof}

From here, the following two propositions are easy to obtain.

\begin{proposition}
\label{reversedpath}
  Let $G$ be a graph and $F \subseteq \{ B_1,B_2, B_3,T_3\}$. There is a
  directed path from $(x,y)$ to $(z,w)$ in $D^+$ if and only if there is a
  directed path  from $(w,z)$ to $(y,x)$ in $D^+$.
\end{proposition}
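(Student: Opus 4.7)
The plan is to reduce Proposition~\ref{reversedpath} to Proposition~\ref{reversedarc} by applying the arc-reversal property edge-by-edge along a path. Since the biconditional is symmetric (applying the reversal operation $(a,b)\mapsto(b,a)$ twice returns to the original pair), it suffices to prove only one direction; the converse follows by swapping the roles of $(x,y)$ and $(z,w)$ and using the involutive nature of the reversal.

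Concretely, I would proceed by induction on the length $k$ of a directed path from $(x,y)$ to $(z,w)$ in $D^+$. The base case $k=1$ is exactly Proposition~\ref{reversedarc}. For the inductive step, suppose
\[
(x,y) = v_0 \to v_1 \to \cdots \to v_{k-1} \to v_k = (z,w)
\]
is a directed path in $D^+$, and write $v_i = (a_i, b_i)$ and $\overline{v_i} = (b_i, a_i)$. By the induction hypothesis applied to the sub-path $v_0 \to \cdots \to v_{k-1}$, there is a directed path from $\overline{v_{k-1}}$ to $\overline{v_0} = (y,x)$ in $D^+$. By Proposition~\ref{reversedarc} applied to the final arc $v_{k-1} \to v_k$, there is an arc from $\overline{v_k} = (w,z)$ to $\overline{v_{k-1}}$. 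Concatenating this arc with the path produced by the inductive hypothesis yields a directed path from $(w,z)$ to $(y,x)$, completing the induction.

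Alternatively, and perhaps more transparently, one can give a direct (non-inductive) argument: given any directed path $v_0 \to v_1 \to \cdots \to v_k$ in $D^+$, Proposition~\ref{reversedarc} turns each arc $v_i \to v_{i+1}$ into an arc $\overline{v_{i+1}} \to \overline{v_i}$, so the reversed sequence $\overline{v_k}, \overline{v_{k-1}}, \ldots, \overline{v_0}$ is again a directed path in $D^+$. I do not anticipate any real obstacle here, since the whole content of the proposition is already packaged into Proposition~\ref{reversedarc}; the only thing to check is that the reversal operation is a well-defined involution on $V^+$, which holds because $xy \in E_G$ iff $yx \in E_G$, so $(a,b) \in V^+$ iff $(b,a) \in V^+$.
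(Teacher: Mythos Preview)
Your proposal is correct and matches the paper's own proof essentially verbatim: the paper simply instructs to induct on the length of the path, using Proposition~\ref{reversedarc} both as the base case and in the inductive step. Your alternative direct argument (reversing each arc along the path via Proposition~\ref{reversedarc}) is just an unwinding of the same induction and is equally valid.
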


\begin{proof}
Proceed by induction over the length of the directed path. Notice that
Proposition~\ref{reversedarc} is the base case. Use again
Proposition~\ref{reversedarc} in the inductive step.
\end{proof}

Let $D$ be a digraph and let $\overleftarrow{D}$ be the digraph obtained from
$D$ by reversing every arc. A digraph $D$ is {\em skew-symmetric} if it is
isomorphic to $\overleftarrow{D}$.

\begin{proposition}
\label{skewsymmetric}
  Let $G$ be a graph and $F\subseteq\{B_1,B_2,B_3,T_3\}$. The constraint digraph
  of $G$ and $F$ is skew-symmetric.
\end{proposition}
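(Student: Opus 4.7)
The plan is to exhibit an explicit isomorphism between $D^+$ and $\overleftarrow{D^+}$. The obvious candidate, given the symmetry already established in Proposition~\ref{reversedarc}, is the involution $\varphi \colon V^+ \to V^+$ defined by $\varphi((x,y)) = (y,x)$.

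First I would check that $\varphi$ is well defined and bijective on $V^+$. This is immediate: $(x,y) \in V^+$ exactly when $xy \in E_G$, and since $E_G$ is a set of unordered pairs, this is equivalent to $yx \in E_G$, i.e., $(y,x) \in V^+$. Since $\varphi \circ \varphi$ is the identity, $\varphi$ is a bijection.

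Next I would verify that $\varphi$ realizes the desired isomorphism $D^+ \to \overleftarrow{D^+}$. By definition, $((x,y),(z,w))$ is an arc of $\overleftarrow{D^+}$ iff $((z,w),(x,y))$ is an arc of $D^+$. Applying Proposition~\ref{reversedarc} to the pair $((z,w),(x,y))$, this happens iff $((y,x),(w,z))$ is an arc of $D^+$, which is precisely $(\varphi((x,y)),\varphi((z,w)))$. Hence $\varphi$ maps arcs of $D^+$ to arcs of $\overleftarrow{D^+}$ and, being a bijection with an involutive inverse satisfying the same property, it is an isomorphism.

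There is no real obstacle here; all the content is packaged into Proposition~\ref{reversedarc}. The only thing to be careful about is stating the correspondence in the right direction (arcs of $D^+$ versus arcs of $\overleftarrow{D^+}$), so that the biconditional of Proposition~\ref{reversedarc} is applied to the correct pair of vertices.
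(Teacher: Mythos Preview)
Your proof is correct and follows exactly the same approach as the paper: both define $\varphi((x,y))=(y,x)$ and invoke Proposition~\ref{reversedarc} to conclude that $\varphi$ is an isomorphism between $D^+$ and $\overleftarrow{D^+}$. Your write-up is slightly more detailed in verifying bijectivity and the arc correspondence, but the idea is identical.
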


\begin{proof}
Let $D$ be a digraph. Let $D^+$ be the constraint digraph of $G$ and $F$.
Consider the function $\varphi \colon V^+ \to V^+$ defined by $\varphi((x,y)) =
(y,x)$. By Proposition~\ref{reversedarc}, we conclude that $\varphi$ is a
digraph isomorphism between $D^+$ and $\overleftarrow{D^+}$.
\end{proof}

By the isomorphism shown in the previous proof, every strong component $S$ in
$D^+$ has a dual component, $\overline{S}$ (which might be equal to $S$),
induced  by the vertices of the form $(y,x)$ where $(x,y)\in S$. By
Proposition~\ref{reversedpath}, a strong component $S_1$ reaches another one
$S_2$, if and only if $\overline{S_2}$ reaches $\overline{S_1}$. A well-known
algorithm of Tarjan \cite{tarjanSIAMJC1972} generates the strong components of a
digraph in reverse topological order (i.e.\ if $S_1$ reaches $S_2$ then $S_2$ is
generated before $S_1$).

Let us go back to the construction of the constraint digraph. Suppose that we
want to find an $F$-free orientation of $G$. An arc $((x,y),(z,w))$ in $D^+$
tells us that, in order to achieve such an orientation, if we orient the edge
$xy$ from $x$ to $y$, then we must orient the edge $zw$ from $z$ to $w$.
Inductively, if there is a path from $(x,y)$ to $(z,w)$ and we orient the edge
$xy$ from $x$ to $y$ then we must orient the edge $zw$ from $z$ to $w$. Thus, if
$(x,y)$ and $(y,x)$ belong to the same strong component, $G$ does not admit an
$F$-free orientation. In fact the reverse implication is also true.  To see
this, we consider the famous $2$-SAT Algorithm \index{Algorithm!2-SAT} due to
Tarjan \cite{aspvallIPL8}, and use it on the constraint digraph $D^+$ associated
to a graph $G$ and a set $F\subseteq \{B_1,B_2,B_3,TT_3\}$. This procedure is
described in Algorithm \ref{alg:master}.

\begin{algorithm}[ht!]
  \SetAlgorithmName{Algorithm}{}
    \DontPrintSemicolon
    \SetKwData{False}{false}\SetKwData{True}{true}
    \SetKwFunction{New}{new}\SetKwFunction{End}{end}\SetKwFunction{Used}{used}
    \SetKwInOut{Input}{input}\SetKwInOut{Output}{output}
  
    \KwIn{A graph $G$ and a set $F\subseteq \{B_1,B_2,B_3,TT_3\}$.}
    \KwOut{A $\{\texttt{true},\texttt{false} \}$-colouring of
    the vertices in $D^+$, or a strong component $S$ such that $S =
    \overline{S}$.}
    \BlankLine
    Construct the constraint digraph $D^+$ associated to $G$ and $F$\;
    Generate the strong components of $D^+$ in reverse topological order\;
    {\For{each strong component $S$ of $D^+$}{
     \uIf{$S = \overline{S}$}{
      \Return $S$\;
    }\uElseIf{vertices in $S$ are not marked}{
      mark each vertex in $S$ $\texttt{true}$ and each vertex in $\overline{S}$
      $\texttt{false}$\;
    }
    }
    }
    {\Return the $\{\texttt{true},\texttt{false} \}$-colouring of the vertices
    of $D^+$}
    \caption{Recognition of $F$-free orientable graphs} \label{alg:master}
  \DecMargin{1em}
  \end{algorithm}

Clearly, Algorithm~\ref{alg:master} stops by determining that there exists a
strong component $S$ such that $S = \overline{S}$ only if there is a vertex
$(x,y)\in V^+$ in the same strong component as $(y,x)$. Otherwise a
$\{\texttt{true,false}\}$-colouring of $D^+$ is obtained, which induces an
$F$-free orientation of $G$. We prove this fact in the following proposition.

\begin{proposition}
\label{prop:algorithm1}
  Let $G$ be a graph and $F\subseteq \{B_1, B_2, B_3, T_3\}$. If Algorithm
  \ref{alg:master} outputs a $\{\texttt{true},\texttt{false} \}$-colouring of
  the vertices in $D^+$, then vertices with colour \texttt{true} induce an
  $F$-free orientation of $G$.
\end{proposition}

\begin{proof}
Clearly, if $(x,y)$ is marked with \texttt{true}, then $(y,x)$ is marked with
\texttt{false}. Also, every vertex receives one and only one truth colour. Hence
the \texttt{true}-coloured vertices of $D^+$ induce an orientation of $G$; that
is, if $(x,y)$ is marked \texttt{true}, then $xy$ is oriented as $(x,y)$. We now
prove that this orientation is an $F$-free orientation of $G$. To do so, we must
prove that for any two oriented edges $(x,y),(w,z) \in V^+$ that induce an
oriented graph in $F$, then at least one is marked with \texttt{false}. By
construction of $A^+$, it must happen that if $(x,y)$ and $(w,z)$ induce an
oriented graph in $F$ then $(x,y) \to (z,w)$ and $(w,z) \to (y,x)$. Hence, we
show that if $(x,y)$ is marked with \texttt{true} and $(x,y) \to (z,w)$, then
$(z,w)$ is also marked with \texttt{true}. Since the algorithm marks all the
vertices in the same strong component at once, it suffices to show that for any
two strong components $S_1$ and $S_2$ of $D^+$, if $S_1$ is
\texttt{true}-coloured and $S_1$ reaches $S_2$, then $S_2$ is also
\texttt{true}-coloured. Suppose that $S_1$ is marked with \texttt{true} and it
reaches $S_2$, but $S_2$ is \texttt{false}-coloured. Since $S_1$ reaches $S_2$,
then $S_2<S_1$, where $<$ is the reverse topological order of the strong
components of $D^+$. Since $S_2$ is marked with \texttt{false} it means that
$\overline{S_2}$ was processed before $S_2$ (i.e.\ $\overline{S_2} < S_2$).
Analogously, we see that $S_1 < \overline{S_1}$. The transitivity of $<$ implies
that $\overline{S_2} < \overline{S_1}$. Since $S_1$ reaches $S_2$, by
Proposition~\ref{reversedpath}, $\overline{S_2}$ reaches $\overline{S_1}$, then
$\overline{S_1} < \overline{S_2}$. The previous inequalities yield the following
chain, $\overline{S_1} < \overline{S_2} < S_2 < S_1 < \overline{S_1}$. From
which we conclude that $\overline{S_1} = \overline{S_2}$; equivalently $S_1 =
S_2$. This contradicts that the algorithm does not assign two different truth
values to the same component. Therefore if $S_1$ reaches $S_2$ and $S_1$ is
marked with \texttt{true}, $S_2$ is marked with \texttt{true} as well.
\end{proof}

These results build up to the following one.

\begin{theorem}
\label{thm:algorithm1}
  Let $G$ be a graph and $F\subseteq\{B_1, B_2, B_3, T_3\}$. The following are
  equivalent:
\begin{itemize}
    \item $G$ admits an $F$-free orientation.

    \item There are no vertices $(x,y),(y,x)\in V^+$ contained in the same
      strong connected component of $D^+$.

    \item For any strong component $S$, $S \cap \overline{S} = \varnothing$
      (i.e.\ $S \ne \overline{S}$).
\end{itemize}
\end{theorem}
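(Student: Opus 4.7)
The plan is to prove $(2) \Leftrightarrow (3)$ directly from definitions, $(2) \Rightarrow (1)$ via Algorithm~\ref{alg:tarjan}, and $(1) \Rightarrow (2)$ by a contrapositive argument that propagates truth values along a directed path in $D^+$. The equivalence of (2) and (3) is almost formal: by Proposition~\ref{skewsymmetric}, $\overline{S}$ is itself a strong component of $D^+$, so distinct strong components are disjoint, making $S \cap \overline{S} = \varnothing$ equivalent to $S \ne \overline{S}$; moreover, a vertex $(y,x) \in S \cap \overline{S}$ is exactly a witness that both $(x,y)$ and $(y,x)$ lie in $S$. For $(2) \Rightarrow (1)$, condition (2) says Algorithm~\ref{alg:tarjan} never triggers the failure case $S = \overline{S}$, so the algorithm terminates producing a full $\{\texttt{true},\texttt{false}\}$-colouring of $V^+$, and Proposition~\ref{proposition:algorithm1} converts this colouring into an $F$-free orientation of $G$.

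The substantive direction is $(1) \Rightarrow (2)$, which I would prove by contrapositive. Suppose some strong component of $D^+$ contains both $(x,y)$ and $(y,x)$; then there is a directed path $(x,y) = v_0 \to v_1 \to \cdots \to v_k = (y,x)$ in $D^+$. Given any orientation $D$ of $G$, define a truth valuation $\tau$ on $V^+$ by $\tau(u,v) = \texttt{true}$ iff $u \to v$ in $D$; by construction $\tau$ assigns complementary values on each pair $\{(u,v),(v,u)\}$. The key claim is that if $D$ is $F$-free, then $\tau$ respects every arc of $D^+$: for every arc $(u,v) \to (u',v')$ of $D^+$, $\tau(u,v) = \texttt{true}$ forces $\tau(u',v') = \texttt{true}$. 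Propagating this along the path forces $\tau((y,x)) = \texttt{true}$ whenever $\tau((x,y)) = \texttt{true}$, contradicting complementarity; the symmetric directed path from $(y,x)$ to $(x,y)$ handles the opposite starting value. Hence no $F$-free orientation of $G$ can exist, which is the contrapositive of $(1) \Rightarrow (2)$.

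The main obstacle is justifying the key claim, which reduces to a case analysis over the four arc families. By inspection of the definitions of $A_1, A_2, A_3, A_t$, an arc $(u,v) \to (u',v')$ is included in $D^+$ precisely when the two oriented edges $(u,v)$ and $(v',u')$ together realise some forbidden pattern from $F$ on the underlying three vertices; consequently $F$-freeness of $D$ rules out $\tau(u,v) = \tau(v',u') = \texttt{true}$, which is exactly the implication needed. This check is routine but indispensable, and it is essentially a formalisation of the informal discussion preceding Algorithm~\ref{alg:tarjan}.
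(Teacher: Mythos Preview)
Your proposal is correct and follows essentially the same approach as the paper: $(2)\Leftrightarrow(3)$ is immediate, $(2)\Rightarrow(1)$ comes from Algorithm~\ref{alg:tarjan} together with Proposition~\ref{proposition:algorithm1}, and $(1)\Rightarrow(2)$ is the contrapositive propagation argument that the paper sketches informally in the paragraph preceding Algorithm~\ref{alg:tarjan}. Your write-up is in fact more careful than the paper's on the last implication, since you spell out the truth-valuation $\tau$ and the case check over $A_1,A_2,A_3,A_t$ that the paper leaves implicit; just note that for $A_t$ the two oriented edges $(u,v)$ and $(v',u')$ do not by themselves constitute a $T_3$, but they force one regardless of how the third triangle edge is oriented, so the conclusion still holds.
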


\begin{proof}
The equivalence between the second and third item is trivial. On the paragraph
preceding Algorithm \ref{alg:master} it was shown that the second statement
implies the first one. The remaining implication is proved by Algorithm
\ref{alg:master} and Proposition \ref{prop:algorithm1}.
\end{proof}

The order of $D^+$ is $2m$, where $m$ is the number of edges of $G$. Also note
that $d_{D^+}((x,y))\leq d_G(x)+d_G(y)$ so, $|A^+|\leq m\Delta(G)\leq mn$. Since
the general step of Algorithm~\ref{alg:master} runs in $O(|V^+|)$ time and
Tarjan's Algorithm \cite{tarjanSIAMJC1972} runs in $O(|V^+| + |A^+|)$ time, our
algorithm runs in $O(mn)$ once $D^+$ is constructed --- to construct $D^+$ we
must process all sets of $3$ vertices, which takes cubic time in $|V|$. These
arguments together with Theorem~\ref{thm:algorithm1} show the the following
statement holds.

\begin{corollary}\label{cor:somesets}
Given an input graph $G$ and an input set $F\subseteq\{B_1,B_2,B_3,T_3\}$,
it is in $P$ to test if $G$ admits an $F$-free orientation. In particular, 
Algorithm~\ref{alg:master} decides if $G$ admits an $F$-free orientation
in cubic time. 
\end{corollary}

\section{Graph properties and small forbidden orientations}
\label{sec:small}

In this section, we study families of $Forb_e(F)$-graphs when $F$ consists of
oriented graphs on three vertices. In \cite{skrienJGT6}, Skrien studied the
cases when $F$ is a set of orientations of $P_3$. For this reason, we study
$Forb_e(F)$-graphs when either $T_1+ T_2 \in F$ or $F$ contains at least one
orientation of $C_3$. We begin by observing the following simple lemma. 

\begin{lemma}
\label{lem:intersection}
  Let $F$ be a set of oriented graphs and consider any graph $H$. If $F_H$ is
  the set of all orientations of $H$, then the class of $Forb_e(F\cup
  F_H)$-graphs is the intersection of $\mathcal{U}_{Forb_e(F-F_H)}$ and $H$-free
  graphs.
\end{lemma}

\begin{proof}
If $G$ is an $H$-free graph, and $G'$ is an $(F-F_H)$-free orientation of $G$,
then $G'$ is an $(F\cup F_H)$-free orientation of $G$. On the other hand, if
$G\in \mathcal{U}_{Forb_e(F\cup F_H)}$ and $G'$ is an $(F\cup F_H)$-free
orientation of $G$, then $G'$ is $(F- F_H)$-free and $F_H$-free. So, $G \in
\mathcal{U}_{Forb_e(F-F_H)}$ and $G$ is $H$-free.
\end{proof}

In particular, since $T_1+ T_2$ is the unique orientation of $K_1+K_2$, then the
class of $Forb_e(F \cup \{T_1 + T_2 \})$-graphs is the intersection of
$\mathcal{U}_{Forb_e( F - \{T_1 + T_2\})}$ and complete multipartite graphs.
Similarly, the class of $Forb_e(F \cup \{\overrightarrow{C}_3,T_3\})$-graphs is
the intersection of $\mathcal{U}_{Forb_e(F-\{\overrightarrow{C}_3,T_3\})}$ and
triangle-free graphs.

\begin{proposition}
\label{prop:somesets}
  Let $F$ be a set of oriented graphs on $3$ vertices. If $F\subseteq \{B_1,
  B_2, B_3, T_3, T_1+T_2\}$ or $\{\overrightarrow{C}_3,T_3\}\subseteq F$, then
  it is in $P$ to test if an input graph admits an  $F$-free orientation.
\end{proposition}

\begin{proof}
This statement follows from Corollary~\ref{cor:somesets} and
Lemma~\ref{lem:intersection}.
\end{proof}

It is direct to verify that if the set of forbidden oriented graphs consists of
connected graphs, then the associated hereditary property is closed under
disjoint unions. Thus, it suffices to study connected graphs.

\begin{table}[ht!]
\begin{center}
  \begin{tabular}{| c | l |}
    \hline
    Forbidden oriented graphs & Graph family\\
    \hline
    $B_1,B_2,B_3$ & Complete graphs. \\
    \hline
    $B_1,B_2$ &  Proper circular-arc graphs.\\
    \hline
    $B_1,B_3$ &  Nested interval graphs.\\
    \hline
    $B_2,B_3$ &  Nested interval graphs. \\
    \hline
    $B_1$ &  Open \\
    \hline
    $B_2$ &  Open \\
    \hline
    $B_3$ & Comparability graphs. \\
    \hline
  \end{tabular}
  \caption{On the left we see a set of forbidden oriented graphs, and on the
    right, the family it characterizes.  This table is taken from
    \cite{skrienJGT6}.}
\end{center}
\end{table}\label{Tab:skrien}

Skrien's results from \cite{skrienJGT6} are included in Table~\ref{Tab:skrien}.
Recall that he found an alternative characterization for all sets containing
orientations of $P_3$, except for perfectly orientable graphs. Bang-Jensen,
Huang and Prisner also studied p.o.\ graphs, in particular, they proved the
following result in \cite{bangjensenJCTB59}.

\begin{proposition}\cite{bangjensenJCTB59}
\label{bangjensen}
  Every graph with exactly one induced cycle of length greater than $3$ is
  perfectly orientable.
\end{proposition}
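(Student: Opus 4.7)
The plan is to reduce to the fact that chordal graphs are $1$-perfectly orientable. Recall that $G$ admits a $B_1$-free orientation precisely when it admits an orientation whose every in-neighbourhood induces a clique, and that a chordal graph always does: orient each edge from its later endpoint to its earlier endpoint in a perfect elimination ordering (PEO), and the in-neighbourhood of each vertex becomes the clique formed by its later neighbours.

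Let $C = v_1 v_2 \cdots v_k$ be the unique induced cycle of length greater than three in $G$, and set $e = v_1 v_2$. The key step is to show that $G - e$ is chordal. Suppose, for contradiction, that $G - e$ contains an induced cycle $C'$ of length at least $4$. By the uniqueness of $C$, $C'$ is not induced in $G$, so $e$ must be a chord of $C'$ in $G$; hence $v_1, v_2 \in V(C')$ lie at distance at least two in $C'$, and we may split $C'$ into two internally disjoint $v_1 v_2$-paths $P_1$ and $P_2$. Since $C'$ is induced in $G - e$ and $V(P_1) \cap V(P_2) = \{v_1, v_2\}$, one checks that $G[V(P_i)] = P_i + e$, which is itself an induced cycle of $G$ on $V(P_i)$; by uniqueness of $C$, each $P_i$ either has exactly three vertices or spans all of $V(C)$.

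A short case analysis on $(|V(P_1)|,|V(P_2)|)$ now finishes the chordality claim, supported by the auxiliary observation that a vertex outside $C$ with two or more neighbours on $C$ must be adjacent to every vertex of $C$ (proved by the same cycle-counting: a gap of length $\geq 2$ between consecutive cyclic neighbours of such a vertex would produce a new induced cycle of length $\geq 4$ through it). Both $P_i$ spanning $V(C)$ is excluded because $|V(P_1) \cap V(P_2)| = 2 < k$; if one $P_i$ spans $V(C)$ and the other is a triangle $v_1 u v_2$, then $u$ is universal to $C$, so $u v_3$ is an edge of $G - e$ and a chord of $C'$; and if both $P_i$ are triangles, their inner vertices are universal-to-$C$ and hence either adjacent (chord of $C'$ in $G - e$) or non-adjacent (producing a second induced $C_4$ in $G$, distinct from $C$).

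Once $G - e$ is chordal, the rest is immediate. Maximum cardinality search started at $v_2$ (Tarjan--Yannakakis) yields a PEO $\sigma$ of $G - e$ in which $v_2$ is last; orient $G - e$ from later to earlier in $\sigma$, giving a $B_1$-free orientation of $G - e$ in which $v_2$ has empty in-neighbourhood. Finally orient $e$ as $v_1 \to v_2$: the in-neighbourhood of $v_2$ becomes $\{v_1\}$, still a clique, and no other in-neighbourhood changes. The only substantive obstacle is the chordality of $G - e$---it is the one step that genuinely uses the uniqueness hypothesis on $C$, and the accompanying case analysis is the most delicate part of the argument.
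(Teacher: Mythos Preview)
The paper does not supply its own proof of this proposition: it is quoted verbatim from Bang--Jensen, Huang, and Prisner \cite{bangjensenJCTB59}, and the authors immediately move on to a different (and weaker) statement, handled through Proposition~\ref{B1T3-free}. Hence there is nothing in the present paper to compare your argument against.

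That said, your proof is correct and self-contained. The reduction to the chordality of $G-e$ is the right idea, and the case analysis goes through. Two places deserve one more sentence of justification. First, when a $P_i$ is the triangle $v_1uv_2$ you apply the auxiliary observation to $u$, which requires $u\notin V(C)$; this holds because if the other path spans $V(C)$ then $u\in V(P_2)\setminus\{v_1,v_2\}$ is disjoint from $V(P_1)=V(C)$, and if both paths are triangles then no vertex of the induced cycle $C$ other than $v_1,v_2$ themselves can be adjacent to both $v_1$ and $v_2$. Second, in the ``both triangles, $u_1u_2\notin E$'' subcase you should name the promised extra long induced cycle explicitly: for instance $u_1v_1u_2v_3$, which is an induced $C_4$ since $v_1v_3\notin E(G)$ (as $C$ is induced of length~$\ge 4$) and it contains $u_1\notin V(C)$, hence is distinct from $C$. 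The final step---running MCS from $v_2$ so that $v_2$ is last in the resulting PEO, and then orienting $e$ into $v_2$---is standard and correct.
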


This result can be equivalently restated as follows: Every triangle-free graph
is perfectly orientable if it has only one induced cycle. With a simpler proof
than the one found in \cite{bangjensenJCTB59}, we prove the biconditional
version of this result, which is a corollary to the following proposition.

\begin{proposition}
\label{B1T3-free}
  The following statements are equivalent for a connected graph $G$:
  \begin{enumerate}
    \item $G$ admits a $\{B_1,T_3\}$-free orientation,

    \item $G$ admits an orientation such that $d^+(x) \le 1$ for every vertex $x
      \in V_G$,

    \item there is function $f \colon V_G \to V_G$ such that $E_G = \{xy \colon
      x \ne y, f(x) = y \}$,

    \item $G$ is unicyclic,

    \item $G$ has no more edges than vertices.
  \end{enumerate}
\end{proposition}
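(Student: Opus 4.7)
The plan is to establish the cyclic chain $(1) \Rightarrow (2) \Rightarrow (3) \Rightarrow (5) \Rightarrow (4) \Rightarrow (2) \Rightarrow (1)$. The pair linking $(1)$ and $(2)$ carries the combinatorial content; the remaining links are essentially bookkeeping.

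For $(1) \Rightarrow (2)$ I would first show that any $\{B_1,T_3\}$-free orientation $\sigma$ of $G$ has in-degree at most $1$ at every vertex. Suppose $y$ has two in-neighbours $x$ and $z$; then $B_1$-freeness applied to the would-be induced $P_3$ on $\{x,y,z\}$ forces $xz \in E_G$, so $\{x,y,z\}$ is a triangle in which $y$ is already the sink. Both possible orientations of $xz$ then make the triangle a transitive tournament, violating $T_3$-freeness. Hence every vertex has $d^-(y) \le 1$ in $\sigma$, and the reverse orientation $\sigma^{-1}$ witnesses $(2)$. The converse $(2) \Rightarrow (1)$ is symmetric: if $\sigma$ has $d^+(x) \le 1$ everywhere then $\sigma^{-1}$ has $d^-(y) \le 1$ everywhere, and both a $B_1$ (in-degree $\ge 2$ at the middle vertex) and a $T_3$ (in-degree $\ge 2$ at its sink) would require some vertex to receive two in-arcs.

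The remaining steps are routine. For $(2) \Leftrightarrow (3)$ I would use the standard dictionary between out-degree-one orientations and functional digraphs: set $f(x)$ to be the unique out-neighbour of $x$ in $\sigma$ when it exists, and $f(x) = x$ otherwise, and reverse this construction for $(3) \Rightarrow (2)$. The step $(3) \Rightarrow (5)$ is the counting bound $|E_G| \le |\{x : f(x) \ne x\}| \le |V_G|$. For $(5) \Rightarrow (4)$, connectedness forces $|E_G| \in \{|V_G|-1, |V_G|\}$, so $G$ is either a tree or has a unique cycle, interpreting \emph{unicyclic} loosely so as to include trees. Finally, for $(4) \Rightarrow (2)$ I would exhibit the orientation directly: in a tree, orient every edge from child to parent with respect to any chosen root; in a graph with a single cycle $C$, orient $C$ cyclically and orient every tree hanging off $C$ toward its attachment vertex on $C$. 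In both cases every vertex has out-degree at most $1$.

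The main obstacle is the double-in-neighbour argument in $(1) \Rightarrow (2)$, where one must see that the $B_1$ and $T_3$ constraints interlock exactly so that a vertex with two in-neighbours is simultaneously forced into a triangle (by $B_1$) and forbidden from admitting the required sink structure there (by $T_3$). Once that in-degree bound is in hand, the entire proposition collapses to reversing orientations and counting edges against $|V_G|$.
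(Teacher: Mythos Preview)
Your proof is correct and follows essentially the same approach as the paper: both hinge on the observation that a $\{B_1,T_3\}$-free orientation is precisely one in which every in-degree is at most $1$ (equivalently, after reversal, every out-degree is at most $1$), and both finish with the same tree/cycle construction for unicyclic graphs. The implication chains differ only cosmetically---the paper routes $(4)\Rightarrow(1)$ and $(2)\Rightarrow(5)$ where you route $(4)\Rightarrow(2)$ and $(3)\Rightarrow(5)$---and you supply the details of $(1)\Leftrightarrow(2)$ that the paper leaves as ``not hard to notice''.
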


\begin{proof}
It is not hard to notice that the first two items are equivalent, and so are the
second and third one. It is also straightforward to show that if $G$ has no more
edges than vertices, then $G$ is unicyclic (recall that $G$ is connected), so
$4$ is an implication of $5$.  Now we prove that the second item implies the
fifth one. Let $D_G$ be an orientation of $G$ such that $d^+(x)\le 1$ for every
vertex $x$ of $G$.  Consider the function $i \colon A_{D_G}\to V_G$ where
$i((x,y)) = x$. Since $d^+(x)\le 1$, $i$ is an injective function. Thus $|E_G| =
|A_{D_G}| \le |V_G|$. To conclude the proof we show that if $G$ is unicyclic, it
admits an $\{B_1,T_3\}$-free orientation. If $G$ is a tree, root $G$ in any
vertex and orient the edges from descendent to ancestor. If $G$ is a cycle,
orient $G$ in a cyclic way. In any other case, let $C$ by the only cycle in $G$.
Orient $C$ in a cyclic way. Notice that $G/C$ is a tree. Root $G/C$ in the
vertex corresponding to $C$. Orient the edges in $G/C$ from descendent to
ancestor. We have oriented all edges in $G$ now, and it it not hard to notice
that this orientation is $\{B_1,T_3\}$-free.
\end{proof}

\begin{corollary}
\label{B1C3T3-free}
  A graph $G$ admits a $\{B_1, \overrightarrow{C_3}, T_3\}$-free orientation if
  and only if $G$ is unicyclic and triangle free.
\end{corollary}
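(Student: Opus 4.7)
The plan is to derive this corollary as an immediate consequence of Proposition~\ref{B1T3-free}, invoking the disjoint-union observation from Section~\ref{sec:small} to reduce to connected graphs. I will prove the two implications separately.

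\emph{Forward direction.} I assume $G$ admits a $\{B_1, \overrightarrow{C_3}, T_3\}$-free orientation $D$. Since $D$ is in particular $\{B_1, T_3\}$-free, Proposition~\ref{B1T3-free} gives that $G$ is unicyclic. For triangle-freeness, I note that any triangle $\{x,y,z\}$ in $G$ must be oriented by $D$ as a tournament on three vertices, and such a tournament is necessarily isomorphic to either $\overrightarrow{C_3}$ or $T_3$. Both possibilities are excluded by hypothesis, so $G$ has no triangle.

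\emph{Backward direction.} Suppose $G$ is unicyclic and triangle-free. By Proposition~\ref{B1T3-free}, $G$ admits a $\{B_1, T_3\}$-free orientation $D$ (one can in fact use the explicit construction in the proof of that proposition: root the unique cycle, orient it cyclically, and orient the attached trees from descendant to ancestor). It remains to check that $D$ is also $\overrightarrow{C_3}$-free, which is automatic: a $\overrightarrow{C_3}$ subdigraph of $D$ would exhibit three pairwise adjacent vertices in $G$, contradicting triangle-freeness.

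The argument is routine once Proposition~\ref{B1T3-free} is in hand; the only substantive observation is the dichotomy for orientations of a triangle, which is what makes forbidding both $\overrightarrow{C_3}$ and $T_3$ simultaneously equivalent (in an orientation) to forbidding triangles in the underlying graph. I do not foresee any serious obstacle.
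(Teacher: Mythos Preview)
Your proof is correct and follows essentially the same approach as the paper: both directions are reduced to Proposition~\ref{B1T3-free}, with the observation that forbidding both tournaments on three vertices forces triangle-freeness of the underlying graph. Your remark about reducing to connected components (needed because Proposition~\ref{B1T3-free} is stated only for connected graphs) is a small point the paper leaves implicit, but otherwise the arguments coincide.
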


\begin{proof}
  Suppose $G$ admits a $\{B_1,\overrightarrow{C_3}, T_3\}$-free orientation.
  Clearly, $G$ is triangle-free and by Proposition~\ref{B1T3-free}, $G$ is also
  a unicyclic graph. On the other hand, consider a triangle-free unicyclic graph
  $G$. By Proposition~\ref{B1T3-free}, it admits a $\{B_1, T_3\}$-free
  orientation $D_G$. Since $G$ is triangle-free, $D_G$ is $\{B_1,
  \overrightarrow{C_3}, T_3\}$-free.
\end{proof}

Another subclass of perfectly orientable graphs is the class of graphs that
admit a $\{B_1,T_1+T_2\}$-free orientation.

\begin{proposition}
\label{B1T1+T2-free}
  The following statements are equivalent for a graph $G$:
  \begin{enumerate}
    \item $G$ admits a $\{B_1, T_1+T_2\}$-free orientation.
    \item $G$ admits a $\{B_2, T_1+T_2\}$-free orientation.
    \item $G$ is a $K_{2,3}$-free complete multipartite graph.
  \end{enumerate}
\end{proposition}

\begin{proof}
  The equivalence between the first two items is obvious. By
  Lemma~\ref{lem:intersection}, a graph $G$ admits a $\{B_1, T_1+T_2\}$-free
  orientation if and only if it is a perfectly-orientable complete multipartite
  graph. In \cite{hartingerJGT2016}, the authors showed that a cograph is
  perfectly orientable graph if and only if it is $K_{2,3}$-free. The claim now
  follows since complete multipartite graphs are cographs.
\end{proof}

By further restricting the family described in Proposition~\ref{B1T1+T2-free},
we immediately obtain the following simple corollary.

\begin{corollary}
\label{B1T1+T2T3C3-free}
  The following statements are equivalent for a graph $G$:
  \begin{enumerate}
    \item $G$ admits a $\{B_1, T_1+T_2, T_3,\overrightarrow{C}_3\}$-free
      orientation.
    \item $G$ admits a $\{B_2, T_1+T_2, T_3,\overrightarrow{C}_3\}$-free
      orientation.
    \item $G$ is a $K_{2,3}$-free complete bipartite graph.
    \item $G$ is a star or $G = C_4$.
  \end{enumerate}
\end{corollary}

When $F = \{B_3, T_3, \overrightarrow{C_3}\}$, the class
$\mathcal{U}_{Forb_e(F)}$ has a well-known characterization, which is a
particular case of the Gallai-Hasse-Roy-Vitaver Theorem.

\begin{proposition}
\label{B3C3T3-free}
  A graph is bipartite if and only if it admits an $\{B_3, T_3,
  \overrightarrow{C_3}\}$-free orientation.
\end{proposition}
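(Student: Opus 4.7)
The plan is to reduce the statement to the Gallai--Hasse--Roy--Vitaver theorem in the case $k=2$, which asserts that a graph $G$ is bipartite if and only if $G$ admits an orientation containing no directed path on three vertices.

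First I would observe that every orientation of a triangle is isomorphic either to $T_3$ or to $\overrightarrow{C_3}$; these are literally the only two tournaments on three vertices. Hence an orientation $D_G$ of $G$ is $\{T_3,\overrightarrow{C_3}\}$-free if and only if the underlying graph $G$ is triangle-free.

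Next I would show the key bridging observation: an orientation $D_G$ is $\{T_3,\overrightarrow{C_3},B_3\}$-free if and only if it contains no directed path on three vertices. For the non-trivial direction, suppose $D_G$ contained two consecutive arcs $x\to y\to z$. The induced oriented subgraph on $\{x,y,z\}$ is $B_3$ when $xz\notin E_G$, and is either $T_3$ or $\overrightarrow{C_3}$ when $xz\in E_G$ (by the previous paragraph). In every case one of the three forbidden patterns appears, contradicting the hypothesis. The converse is immediate, since each of $B_3$, $T_3$ and $\overrightarrow{C_3}$ contains a directed $P_3$.

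Finally I would invoke the Gallai--Hasse--Roy--Vitaver theorem to conclude: $G$ admits a $\{T_3,\overrightarrow{C_3},B_3\}$-free orientation if and only if $G$ admits an orientation without a directed $P_3$, if and only if $G$ is bipartite. There is no real obstacle here, because Gallai--Hasse--Roy--Vitaver does all the heavy lifting; the only point requiring care is the elementary case analysis on how two consecutive arcs can close up into a triangle, which is what forces the three forbidden patterns to capture exactly the absence of directed $P_3$'s.
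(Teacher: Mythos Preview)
Your proposal is correct and matches the paper's approach exactly: the paper does not give a separate proof but simply remarks that this proposition ``is a particular case of the Gallai--Hasse--Roy--Vitaver Theorem,'' which is precisely the reduction you carry out via the observation that forbidding $B_3$, $T_3$, and $\overrightarrow{C_3}$ is the same as forbidding a directed $P_3$.
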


In \cite{skrienJGT6}, Skrien shows that a graph is a proper circular arc graph
if and only if it admits a $\{B_1,B_2\}$-free orientaion.  A proper circular-arc
graph is a graph that admits an intersection model where no arc is contained in
another. A family of sets $\mathcal{A}$ is said to have the Helly property, if
for any subfamily $\mathcal{B} \subseteq \mathcal{A}$ such that for any two sets
$A,B \in \mathcal{B}$, $A \cap B \ne \varnothing$, then the intersection of all
sets in $\mathcal{B}$ is non-empty. A (proper) Helly cicular-arc graph is a
graph that admits an intersection model that satisfies the Helly property (and
no arc is contained in another). We extend Skrien's result to proper Helly
circular-arc graphs.

\begin{proposition}
\label{B1B2C3-free}
  A graph $G$ admits a $\{B_1, B_2, \overrightarrow{C_3}\}$-free orientation if
  and only if $G$ is a proper Helly circular-arc graph.
\end{proposition}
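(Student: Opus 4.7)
The plan is to prove each direction by matching edge orientations with how the corresponding arcs overlap near their clockwise and counterclockwise endpoints.

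For the sufficiency direction ($\Leftarrow$), I would start with a proper Helly representation $\{A_v\}_{v\in V(G)}$ and, writing $A_v = [l_v,r_v]$ in the clockwise direction, orient each edge $uv$ by $u\to v$ precisely when $l_v\in A_u$ and $r_v\notin A_u$; since the representation is proper, exactly one such choice is available for every edge. The $B_1$ pattern $x\to y\leftarrow z$ would give $l_y\in A_x\cap A_z$, forcing $xz\in E(G)$. The $B_2$ pattern is symmetric: if $x\leftarrow y\to z$, then both $A_x$ and $A_z$ enter $A_y$ at their tails and exit through $r_y$, so $r_y\in A_x\cap A_z$. To rule out $\overrightarrow{C_3}$, suppose $u\to v\to w\to u$: a short computation with the clockwise endpoints shows that inside $A_v$ the sub-arcs $A_u\cap A_v = [l_v,r_u]$ and $A_v\cap A_w = [l_w,r_v]$ must be disjoint (using $r_u\notin A_w$ from $w\to u$), hence $A_u\cap A_v\cap A_w=\varnothing$, contradicting the Helly property applied to the clique $\{u,v,w\}$.

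For the necessity direction ($\Rightarrow$), assume $G$ admits a $\{B_1,B_2,\overrightarrow{C_3}\}$-free orientation $D$. By Skrien's result (Table~\ref{Tab:skrien}) $G$ is already a proper circular-arc graph, so the task is to refine a proper representation into one that is also Helly. I would exploit the fact that $\overrightarrow{C_3}$-freeness combined with $\{B_1,B_2\}$-freeness forces every triangle, and inductively every clique of $G$, to be transitively oriented; in particular, every maximal clique has a unique source in $D$. Using the cyclic order of maximal cliques along any proper representation, I would assign a distinguished ``clique point'' $p_K$ to each maximal clique $K$ and define a new arc $A_v$ as the smallest arc containing $\{p_K : v\in K\}$. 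The Helly property is then automatic, since every clique of $G$ is contained in some maximal clique $K$ whose point $p_K$ lies in every arc of its members.

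The hard part is verifying that this construction yields a \emph{proper} representation, which reduces to the structural claim that no maximal clique's clique points wrap around the entire circle. I would prove this by contradiction: a wrap-around clique would provide three vertices whose relative orientations, dictated by the $\{B_1,B_2\}$-free cyclic structure of $D$ restricted to that clique, necessarily form a $\overrightarrow{C_3}$, violating the hypothesis on $D$. A degenerate case must be set aside when $G$ is complete: there any transitive tournament is a $\{B_1,B_2,\overrightarrow{C_3}\}$-free orientation, and $G$ is trivially a proper Helly interval graph.
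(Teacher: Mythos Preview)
Your sufficiency argument ($\Leftarrow$) is essentially the paper's: both orient $uv$ according to the cyclic pattern $l_u,l_v,r_u,r_v$ of endpoints and then check the three patterns. One caveat: your justification ``since the representation is proper, exactly one such choice is available'' is not quite right. Properness alone does not exclude two arcs $A_u,A_v$ that together cover the circle, in which case both $l_v,r_v\in A_u$ and your rule assigns no direction (and your claim $A_u\cap A_v=[l_v,r_u]$ fails). The paper handles this by invoking McKee's characterisation \cite{mckeeDM2003} to pick a proper Helly model in which no two or three arcs cover the circle; once you add that one sentence, your direct Helly argument for $\overrightarrow{C_3}$ goes through and is a pleasant alternative to the paper's ``no three arcs cover the circle'' phrasing.

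The real divergence is in the necessity direction ($\Rightarrow$). The paper does \emph{not} build a model at all: it quotes the result of Lin, Soulignac and Szwarcfiter \cite{linDAM2013} that a proper circular-arc graph is proper Helly if and only if it contains neither the Haj\'os graph nor the $4$-wheel as an induced subgraph, and then simply checks (a few cases each) that neither of these two graphs admits a $\{B_1,B_2,\overrightarrow{C_3}\}$-free orientation. That is the whole argument. Your plan---assigning clique points $p_K$, taking $A_v$ to be the smallest arc through $\{p_K:v\in K\}$, and then arguing properness by contradiction via a forced $\overrightarrow{C_3}$---is a genuinely different route that could in principle be made to work, but as written it has real gaps: ``smallest arc containing a set of points'' is not well-defined on a circle without further constraints, you have not argued that the new arcs still realise exactly the edges of $G$, and the claimed contradiction from a wrap-around clique is only asserted. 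Given that a one-line citation to \cite{linDAM2013} plus two small case checks suffices, your construction is doing far more work than necessary, and the ``hard part'' you flag would need substantial additional argument to close.
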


\begin{proof}
  Let $G$ be a graph that admits a $\{B_1,B_2, \overrightarrow{C_3}\}$-free
  orientation. By line two of Table~\ref{Tab:skrien}, we know that $G$ must be a
  proper circular-arc graph. Corollary 5 in \cite{linDAM2013}, shows that a
  proper circular-arc graph is a proper Helly circular-arc graph if it contains
  neither the Hajos graph nor a $4$-wheel as an induced subgraph. It is not hard
  to notice that neither of those graphs admit a $\{B_1,
  B_2,\overrightarrow{C_3}\}$-free orientation. Thus, since $G$ is a proper
  circular-arc graph, $G$ must be a proper Helly circular-arc graph.

  In \cite{mckeeDM2003}, it is proved that a model of a proper circular-arc
  graph is the model of a proper Helly circular-arc graph if and only if no two
  nor three arcs cover its circle. Consider a proper Helly circular-arc graph
  $G$. Let $\mathcal{A} = \{A_1, A_2, \dots, A_n\}$ be a model of $G$ where no
  three arcs cover the circle. Moreover, we can assume that no end points of the
  arcs in $\mathcal{A}$ coincide. Let us denote by $l_i$ the anti-clockwise end
  point of $A_i$, and by $r_i$ the clockwise end point. We denote by $D_G$ the
  following orientation of $G$. Consider an edge $A_iA_j\in E_G$. By moving in a
  clockwise motion around the circle, we see the endpoints of $A_i$ and $A_j$
  form the sequence $[l_i, l_j,r_i,r_j]$ or $[l_j,l_i,r_j,r_i]$. We orient
  $A_iA_j$ form $A_i$ to $A_j$ when we see $[l_i,l_j, r_i,r_j]$, in the other
  case we orient it from $A_j$ to $A_i$. Bearing in mind that there are no three
  arcs that cover the circle, it is easy to see $D_G$ is
  $\{B_1,B_2,\overrightarrow{C_3}\}$-free.
\end{proof}

Interval graphs are particular instances of circular-arc graphs. In
\cite{skrienJGT6}, Skrien showed that \textit{nested interval} graphs correspond
to $Forb_e(\{B_1,B_3\})$-graphs. A \textit{nested interval} graph is a graph $G$
that admits an intersection model that consists of nested intervals of the real
line. Equivalently, a graph $G$ is a nested interval graph if and only if it is
$\{P_4,C_4\}$-free \cite{golumbicDM24}. These graphs are also called
trivially-perfect graphs.

\begin{proposition}
\label{B1B3C3-free}
  For a graph $G$ the following statements are equivalent:
  \begin{enumerate}
    \item $G$ admits a $\{B_1,B_3\}$-free orientation.
    \item $G$ admits a $\{B_1,B_3, \overrightarrow{C}_3\}$-free orientation.
    \item $G$ is a nested interval graph.
    \item $G$ is a trivially perfect graph.
  \end{enumerate}
\end{proposition}

\begin{proof}
The equivalent between the first and third statement is proved in
\cite{skrienJGT6}. The equivalence between the last two items is argued above
this proposition. Clearly, the second statement is a particular case of the
first one. To conclude the proof we show that the third statement implies the
second one. This is immediate considering the orientation defined by the
inclusion of the nested intervals. 
\end{proof}

Since every graph admits an acyclic orientation, every graph admits a
$\overrightarrow{C_3}$-free orientation.  On the contrary, not every graph
admits a $T_3$-free orientation. Recall that a graph is {\em locally bipartite}
if the open neighbourhood of every vertex induces a bipartite graph.

\begin{proposition}
\label{obs1:T3}
For any graph $G$ the following statements hold:
\begin{itemize}
     \item if $G$ is $3$-colourable, then it admits a $T_3$-free orientation,

     \item if $G$ admits a $T_3$-free orientation, then it is $K_4$-free,

     \item if $G$ admits a $T_3$-free orientation, then it is locally bipartite.
\end{itemize}
\end{proposition}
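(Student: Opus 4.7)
The plan is to handle the three implications separately, each by a short direct argument. For the first, given a proper $3$-colouring $c \colon V_G \to \{0,1,2\}$, I would orient every edge $uv$ from $u$ to $v$ whenever $c(v) \equiv c(u)+1 \pmod{3}$. Since $c$ is proper and only three colours are used, every triangle of $G$ receives all three colours once each, and the rule orients such a triangle as a directed $3$-cycle; in particular no triangle is transitive, so the orientation is $T_3$-free.

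For the second implication, I would invoke the folklore fact that every tournament on four vertices contains a transitive triangle. The out-degrees in such a tournament sum to $\binom{4}{2}=6$, so some vertex $v$ has out-degree at least two; if $x, y$ are two out-neighbours of $v$, then whichever way the arc between $x$ and $y$ is oriented, the triangle on $\{v,x,y\}$ is transitive. Hence if $G$ contains a copy of $K_4$, every orientation of $G$ contains a copy of $T_3$, contradicting the hypothesis.

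For the third implication, fix a $T_3$-free orientation $D$ of $G$ and a vertex $v \in V_G$; partition $N(v)$ into $N^+_D(v)$ and $N^-_D(v)$ according to whether the arc between $v$ and the neighbour leaves $v$ or enters $v$. Given an edge $xy$ of $G$ with both endpoints in $N^+_D(v)$, whichever way $xy$ is oriented in $D$ produces a transitive triangle on $\{v,x,y\}$ with source $v$; the symmetric case $\{x,y\} \subseteq N^-_D(v)$ produces a transitive triangle with sink $v$. Hence every edge of $G[N(v)]$ must cross the partition $(N^+_D(v), N^-_D(v))$, which exhibits $G[N(v)]$ as bipartite.

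None of the three steps presents a genuine obstacle; the only care needed is in the third part, where one must check both orientations of $xy$ in each of the two cases to confirm that a transitive triangle always arises.
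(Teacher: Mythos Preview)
Your proof is correct and follows essentially the same approach as the paper's: orient edges cyclically according to a proper $3$-colouring for the first item, observe that $K_4$ has no $T_3$-free orientation for the second, and split $N(v)$ into $N_D^+(v)$ and $N_D^-(v)$ and show both are independent for the third. You supply more detail than the paper does (in particular, the paper simply asserts that $K_4$ admits no $T_3$-free orientation, whereas you give the pigeonhole argument on out-degrees), but the underlying ideas are identical.
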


\begin{proof}
  Let $G$ be graph with a proper colouring $(V_0, V_1, V_2)$ . By orienting the
  edges of $G$ from $V_i$ to $V_{i+1}$, with subindices taken modulo 3, we
  obtain a $T_3$-free orientation of $G$. In order to prove the second item, it
  suffices to notice that $K_4$ does not admit a $T_3$-free orientation. Let
  $D_G$ be a $T_3$-free orientation of a graph $G$. For any vertex $x \in V_G$,
  the sets $N_{D_G}^+(x)$ and $N_{D_G}^-(x)$ are a partition of $N_G(x)$. Since
  $D_G$ is $T_3$-free, $N_{D_G}^+(x)$ and $N_{D_G}^-(x)$ are independent sets.
\end{proof}

As we will see later, the statements in the previous proposition are far from
being necessary and sufficient conditions for a graph $G$ to admit a $T_3$-free
orientation. For the moment, recall the well known result of Mycielski stating
that the chromatic number on triangle-free graphs is unbounded
\cite{mycielskiCM1995}. Thus, there are graphs with arbitrary large chromatic
number that admit a $T_3$-free orientation.   Nonetheless, for perfect graph,
the first condition of the previous proposition actually characterizes graphs
admitting a $T_3$-free orientation.

\begin{proposition}
\label{perfectgraphsT3}
  A perfect graph $G$ admits a $T_3$-free orientation if and only if it is
  $3$-colourable.
\end{proposition}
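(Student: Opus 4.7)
The plan is to prove the two directions separately and to leverage Proposition~\ref{obs1:T3} together with the definition of perfect graphs; in fact both directions reduce to results already in place.

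For the forward (``if'') direction, I would simply invoke the first bullet of Proposition~\ref{obs1:T3}: any $3$-colourable graph admits a $T_3$-free orientation by orienting edges from colour class $V_i$ to colour class $V_{i+1}$ (indices mod $3$). Perfectness plays no role here, so this direction is immediate for every graph, not only for perfect ones.

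For the converse (``only if'') direction, suppose $G$ is perfect and admits a $T_3$-free orientation. The second bullet of Proposition~\ref{obs1:T3} tells us that $G$ is $K_4$-free, i.e.\ $\omega(G)\le 3$. Now invoke the defining property of a perfect graph: for every induced subgraph $H$ of $G$ (in particular $G$ itself) one has $\chi(H)=\omega(H)$. Hence $\chi(G)=\omega(G)\le 3$, which is exactly what we needed.

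I don't anticipate any real obstacle: the whole content of the proposition is the observation that ``admits a $T_3$-free orientation'' forces $\omega\le 3$ via the absence of $K_4$, and perfectness converts the clique bound into a chromatic bound. The only care I would take is to note explicitly that we use $\chi(G)=\omega(G)$ (not just $\chi(H)=\omega(H)$ for proper induced subgraphs), and to record that the hypothesis of perfection is essential for the converse, since Mycielski-type triangle-free constructions give $T_3$-free orientable graphs of arbitrarily large chromatic number, as already remarked in the paragraph preceding the statement.
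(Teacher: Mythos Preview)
Your proposal is correct and follows essentially the same approach as the paper's own proof: both directions are obtained from Proposition~\ref{obs1:T3}, using the first bullet for the ``if'' direction and the second bullet together with $\chi(G)=\omega(G)$ for the ``only if'' direction. The paper's argument is slightly terser, but the logical content is identical.
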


\begin{proof}
  Consider a perfect graph $G$. By Proposition~\ref{obs1:T3}, if $G$ is
  $3$-colourable it admits a $T_3$-free orientation. On the other hand, suppose
  that $G$ admits a $T_3$-free orientation. By Proposition~\ref{obs1:T3}, $G$ is
  $K_4$-free. Since $G$ is perfect, $G$ is $3$-colourable.
\end{proof}

\begin{corollary}
\label{K1K2T3-free}
  A graph admits a $\{T_1+ T_2,T_3\}$-free orientation if and only if it is a
  complete $3$-partite graph.
\end{corollary}

\begin{proof}
  By part 1 of Lemma~\ref{lem:intersection}, the class $\mathcal{U}_{Forb_e(T_1+
  T_2,T_3)}$ is the intersection of $\mathcal{U}_{Forb_e(T_3)}$ and complete
  multipartite graphs. Since complete multipartite graphs are perfect graphs,
  the claim follows by Proposition~\ref{perfectgraphsT3}.
\end{proof}

Since comparability graphs are perfect graphs, the following propositions stem
from Proposition~\ref{perfectgraphsT3}.

\begin{proposition}
\label{B3T3-free}
  A graph admits a $\{B_3,T_3\}$-free orientation if and only if it is a
  $3$-colourable comparability graph.
\end{proposition}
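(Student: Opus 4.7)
The plan is to combine Skrien's characterisation of comparability graphs as $\{B_3\}$-graphs (the last row of Table~\ref{Tab:skrien}) with Proposition~\ref{perfectgraphsT3}, and, for the converse, to produce an explicit $\{B_3, T_3\}$-free orientation by cyclically orienting $G$ along the colour classes of a proper $3$-colouring coming from the level function of a transitive orientation.

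For the forward direction, suppose $G$ admits a $\{B_3, T_3\}$-free orientation $D$. Since $D$ is in particular $B_3$-free, the last row of Table~\ref{Tab:skrien} yields that $G$ is a comparability graph, and hence perfect. Since $D$ is also $T_3$-free, Proposition~\ref{perfectgraphsT3} applied to the perfect graph $G$ gives that $G$ is $3$-colourable, as required.

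For the converse, let $G$ be a $3$-colourable comparability graph and fix a transitive orientation $D$ with associated poset $P$. Since $G$ is perfect, its clique number equals its chromatic number, so $\omega(G) \le 3$; as cliques of $G$ correspond to chains of $P$, every chain of $P$ has at most three elements. Define the level $\ell(v)$ to be one less than the number of elements of a longest chain of $P$ ending at $v$; then $\ell \colon V_G \to \{0, 1, 2\}$ is a proper $3$-colouring with colour classes $V_0, V_1, V_2$. I would then orient $G$ cyclically: every edge $xy$ with $x \in V_i$ and $y \in V_{i+1}$ (indices taken mod $3$) receives the orientation $x \to y$. By the construction used in Proposition~\ref{obs1:T3} this orientation is $T_3$-free.

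The main obstacle lies in verifying $B_3$-freeness. Consider an arbitrary directed path $u \to v \to w$ in the cyclic orientation; the colour pattern of $(u, v, w)$ is forced to be one of $(V_0, V_1, V_2)$, $(V_1, V_2, V_0)$, or $(V_2, V_0, V_1)$. In the ``natural'' pattern $(V_0, V_1, V_2)$ both arcs agree with $D$, producing the $P$-chain $u <_P v <_P w$; transitivity of $D$ then gives $u <_P w$ and hence $uw \in E_G$, as required. The remaining two patterns involve the arc reversal between $V_0$ and $V_2$, and they reduce to a structural statement about $P$: whenever the middle vertex has two neighbours on ``opposite sides'' of this reversal, those neighbours must already be comparable in $P$ (and hence adjacent in $G$). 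Arguing this from transitivity of $D$ together with the height-$3$ restriction on $P$ will be the crux of the proof.
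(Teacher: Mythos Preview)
Your forward direction matches the paper's. For the converse, your level-function-plus-cyclic-orientation strategy is essentially the paper's as well: the paper partitions $V_G$ into $<$-minimal elements, $<$-maximal elements, and the rest, and orients cyclically along these three classes, which is the same scheme you propose.

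The gap you flag is genuine and cannot be closed, because the statement itself is false. Take the paw graph: a triangle $abc$ together with a pendant edge $cd$. It is a $3$-colourable comparability graph (for instance $a\to b$, $a\to c$, $b\to c$, $d\to c$ is a transitive orientation, and it is $K_4$-free). Yet it has no $\{B_3,T_3\}$-free orientation: to avoid $T_3$ the triangle must be a directed $3$-cycle, say $a\to b\to c\to a$, and then $d\to c$ yields the induced $B_3$ path $d\to c\to a$, while $c\to d$ yields the induced $B_3$ path $b\to c\to d$. With the transitive orientation above your level classes are $V_0=\{a,d\}$, $V_1=\{b\}$, $V_2=\{c\}$, and the offending $b\to c\to d$ is exactly your wrap-around pattern $(V_1,V_2,V_0)$; the endpoints $b$ and $d$ are not comparable in $P$, so the structural claim you hoped to derive from transitivity plus the height bound fails.

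The paper's proof skips over precisely this point: after noting that on an induced $P_3$ with middle vertex $y$ one must have either $x<y$, $z<y$ or $y<x$, $y<z$ in the poset, it asserts without further argument that ``$\{x,y,z\}$ induces either a $B_1$ or $B_2$ in $D_G$''. The paw shows that this assertion is wrong, so both your attempt and the paper's argument break at the same step.
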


\begin{proof}
  If a graph $G$ admits a $\{B_3,T_3\}$-free orientation, then it is a
  comparability graph. Thus, $G$ is a perfect graph that admits a $T_3$-free
  orientation. By Proposition~\ref{perfectgraphsT3}, $G$ is a $3$-colourable
  comparability graph. Now suppose that $G$ is a $3$-colourable comparability
  graph. Since $G$ is perfect, it is $K_4$-free. Consider the partial order of
  the vertices, $<$, induced by the edges of $G$. Let $X_1 = \{ x \in V_G \colon
  x$ is $<$-minimal$\}$, $X_3 = \{ x \in V_G \colon x$ is $<$-maximal$\}$ and
  $X_2 = V_G - (X_1 \cup X_3)$. It follows from the construction of $X_i$, $i
  \in \{ 1,2,3 \}$, and the fact that $G$ is $K_4$-free, that the sets $X_i$ is
  an independent set for $i \in \{ 1,2,3 \}$. Orient the edges from $X_1$ to
  $X_2$, from $X_2$ to $X_3$ and from $X_3$ to $X_1$; name this orientation
  $D_G$. Clearly, $D_G$ is $T_3$-free. In order to show that $D_G$ is also
  $B_3$-free, consider three vertices $x,y,z \in V_G$, that induce a path on
  $G$. Since $\{ x,y,z \}$ does not induce a triangle, it may not happen that $x
  < y < z$. Thus $x<y$ and $z<y$, or $y<x$ and $y<z$. Then $\{ x,y,z \}$ induces
  either a $B_1$ or $B_2$ in $D_G$. Concluding that $D_G$ is a $\{ B_3,T_3
  \}$-free orientation of $G$.
\end{proof}

Before proceeding to study the non perfect graphs that admit a $T_3$-free
orientation, allow us to study three very simple subclasses.

\begin{proposition}
\label{B1B2T3-free}
  A graph $G$ admits a $\{B_1,B_2,T_3\}$-free orientation if and only if
  $\Delta(G) \le 2$. Equivalently, $G$ admits a $\{ B_1,B_2,T_3 \}$-free
  orientation if and only $G$ is a dijsoint union of paths and cycles.
\end{proposition}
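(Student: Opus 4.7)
The plan is to prove both directions separately, with the forward direction requiring a short case analysis built on the pigeonhole principle, and the backward direction handled by an explicit construction.

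For the \emph{only if} direction, suppose for contradiction that $G$ admits a $\{B_1, B_2, T_3\}$-free orientation $D_G$ but contains a vertex $v$ with $d_G(v) \ge 3$, having neighbours $u_1, u_2, u_3$. By pigeonhole, at least two of the arcs at $v$ point the same way; without loss of generality (the other case is symmetric and handled by considering $B_1$ instead of $B_2$) assume $v \to u_1$ and $v \to u_2$ in $D_G$. To avoid creating a copy of $B_2$, we must have $u_1u_2 \in E_G$. Now split on the orientation of the third edge.

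If $v \to u_3$ as well, then by the same argument $u_1u_3, u_2u_3 \in E_G$, so $\{v,u_1,u_2,u_3\}$ induces a $K_4$ in $G$; but any orientation of $K_4$ contains a $T_3$ (some vertex has in-degree at least two, and the orientation of the edge between its two in-neighbours completes a transitive triangle), contradicting $T_3$-freeness. Otherwise $u_3 \to v$; then consider the orientation of $u_1u_2$. If $u_1 \to u_2$, the three arcs $v \to u_1$, $u_1 \to u_2$, $v \to u_2$ form a $T_3$ on $\{v, u_1, u_2\}$, and the reversed orientation $u_2 \to u_1$ similarly yields $T_3$ on $\{v, u_2, u_1\}$. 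Either way we contradict the hypothesis, so $\Delta(G) \le 2$, which is equivalent to $G$ being a disjoint union of paths and cycles.

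For the \emph{if} direction, let $G$ be a disjoint union of paths and cycles and orient it componentwise: every path is oriented consistently from one endpoint to the other, and every cycle is oriented cyclically. Under this orientation every vertex has in-degree at most $1$ and out-degree at most $1$, which immediately rules out $B_1$ and $B_2$ (both require a vertex with in- or out-degree at least $2$). For $T_3$, note that paths contain no triangle, cycles of length at least $4$ contain no triangle, and the cyclic orientation of $C_3$ is $\overrightarrow{C_3}$, not $T_3$. Hence the orientation is $\{B_1, B_2, T_3\}$-free.

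The only delicate step is the case analysis above, and in particular the sub-case in which all three edges at $v$ are oriented the same way: here one genuinely needs the $T_3$-freeness hypothesis (together with the elementary fact that every orientation of $K_4$ contains a transitive triangle) to close the argument; the other sub-case dispatches itself purely from the $B_2$ condition.
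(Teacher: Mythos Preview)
Your proof is correct and follows the same strategy as the paper's: pigeonhole on the arcs at a vertex of degree at least~$3$, and the explicit ``directed path / directed cycle'' orientation for the converse. The only remark is that your forward direction takes an unnecessary detour. Once you have established $u_1u_2\in E_G$ (to avoid $B_2$), the triangle $\{v,u_1,u_2\}$ with $v\to u_1$ and $v\to u_2$ is already a $T_3$ no matter how $u_1u_2$ is oriented --- exactly the observation you make in your second sub-case. Hence the split on the orientation of $vu_3$ and the $K_4$ argument are superfluous; the paper's proof stops at this point, noting simply that two same-direction arcs at $x$ yield either $B_1$/$B_2$ (endpoints non-adjacent) or $T_3$ (endpoints adjacent).
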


\begin{proof}
  Recall that $\Delta(G) \le 2$ if and only if $G$ is a disjoint union of paths
  and cycles. Suppose that there is a vertex $x \in V_G$ with at least three
  distinct neighbours, $y,z,w$. Let $D_G$ be an orientation of $G$. Without loss
  of generality, $y$ and $z$ will be in-neighbours of $x$ in $D_G$. If $yz \in
  E_G$ then $\{ x,y,z \}$ will induce a $T_3$ in $D_G$. On the other hand, if
  $yz \not \in E_G$, $\{ x,y,z \}$ will induce a $B_1$ in $D_G$. Thus if
  $\Delta(G) \ge 3$, $G$ does not admit a $\{B_1, B_2, T_3\}$-free orientation.
  To conclude the proof, consider a disjoint union of paths and cycles $G$. By
  orienting every cycle and path of $G$ in a directed way, we obtain a
  $\{B_1,B_2,T_3\}$-free orientation of $G$.
\end{proof}

\begin{proposition}
\label{B1B3T3-free}
  A connected graph $G$ admits a $\{B_1,B_3,T_3\}$-free orientation if and only
  if $G$ is a star or a triangle.
\end{proposition}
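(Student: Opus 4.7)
The plan is to handle sufficiency and necessity separately and, for necessity, to reduce to a short list of small obstructions, using the fact that admitting a $\{B_1,B_3,T_3\}$-free orientation is closed under induced subgraphs. For sufficiency, I would orient $K_{1,n}$ outward from its centre, so that no vertex has two in-neighbours (ruling out $B_1$), no vertex has both an in-arc and an out-arc (ruling out every directed $P_3$, and in particular $B_3$), and there are no triangles (ruling out $T_3$); and I would orient $K_3$ as a directed $3$-cycle, which is transitive-triangle-free, while the pairwise adjacency of its three vertices precludes any $B_1$ or $B_3$, both of which require a non-edge among the endpoints.

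For necessity, assume $G$ is connected, not a star, and not a triangle; by heredity it suffices to produce an induced subgraph of $G$ that does not admit a $\{B_1,B_3,T_3\}$-free orientation. I would split on whether $G$ contains a triangle. If $G$ contains a triangle $T=\{a,b,c\}$, then $G\neq K_3$ and connectedness supply a vertex $d\notin T$ with a neighbour in $T$; the set $\{a,b,c,d\}$ then induces a triangle with a pendant edge, the diamond $K_4-e$, or $K_4$, according to whether $|N_G(d)\cap T|$ equals $1$, $2$, or $3$. I would verify each is bad: in $K_4$ a pigeonhole on the three arcs at any vertex produces a transitive triangle; in the diamond both triangles on the shared edge must be directed cycles, which forces two arcs into the same endpoint from a non-adjacent pair and hence a $B_1$; in the triangle with a pendant edge, the triangle must be cyclic and either orientation of the pendant edge extends one of its arcs to a directed $P_3$ with non-adjacent endpoints, i.e.\ a $B_3$.

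If $G$ is triangle-free and not a star, I would show $G$ contains an induced $P_4$ or $C_4$: a triangle-free connected non-star tree has diameter at least $3$ and hence an induced $P_4$, and otherwise $G$ has a cycle of length at least $4$; a shortest such cycle of length $4$ is induced (a chord would create a triangle) and any shortest cycle of length at least $5$ contains an induced $P_4$ on four consecutive vertices. A direct case analysis kills both candidates: in $P_4=v_1v_2v_3v_4$, fixing the orientation of $v_2v_3$ and then of $v_3v_4$ yields either a $B_3$ or a $B_1$ with endpoints $v_2,v_4$; in $C_4$, avoiding $B_1$ forces every vertex to have out-degree at least $1$ and hence the orientation to be a directed $4$-cycle, which immediately produces a $B_3$ on any three consecutive vertices. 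The step I expect to be the main obstacle, at least to phrase cleanly, is the structural observation that a triangle-free connected non-star contains $P_4$ or $C_4$ as an induced subgraph; everything else is short case-work on orientations.
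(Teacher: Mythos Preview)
Your argument is correct and follows essentially the same route as the paper: the paper invokes the characterization ``a connected graph is a star if and only if it is $\{P_4,C_4,C_3\}$-free'' and then asserts that $P_4$, $C_4$, and each of the three connected $4$-vertex supergraphs of $C_3$ fail to admit a $\{B_1,B_3,T_3\}$-free orientation, whereas you unpack both of these assertions explicitly. The only substantive difference is level of detail, not strategy.
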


\begin{proof}
  It is trivial to find a $\{B_1,B_3,T_3\}$-free orientation of a star or a
  triangle. Recall that a connected graph $G$ is a star if and only if $G$ is
  $\{P_4,C_4,C_3\}$-free. Notice that neither $P_4$ nor $C_4$ admit a
  $\{B_1,B_3\}$-free orientation. Thus if $G$ does not contain a triangle and
  admits a $\{B_1,B_3,T_3\}$-free orientation, $G$ is a star. On the contrary,
  if $G$ contains a triangle, observe that neither of the three connected
  supergraphs of $C_3$ on four vertices, admit a $\{B_1,B_3,T_3\}$-free
  orientation. Thus, if $G$ contains a triangle $C$, then $G=C$.
\end{proof}

\begin{corollary}
\label{B1B3C3T3-free}
  A graph $G$ admits a $\{B_1,B_3,T_3,\overrightarrow{C}_3\}$-orientation if and
  only if it is a star forest. Equivalently, $G$ admits a
  $\{B_2,B_3,T_3,\overrightarrow{C}_3\}$-orientation if and only if it is a star
  forest.
\end{corollary}

\section{$Forb_e(T_3)$-graphs}
\label{sec:T3}

The following results build up to characterize the family of graphs that admit a
$\{T_3\}$-free orientation.

\begin{proposition}
\label{homduality}
  Consider a set of tournaments $F$ and a $Forb_e(F)$-graph $H$. If a graph $G$
  admits a homomorphism $\varphi \colon G \to H$, then $G$ admits an $F$-free
  orientation.
\end{proposition}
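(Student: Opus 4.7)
The plan is to pull back an $F$-free orientation of $H$ along $\varphi$ to obtain the desired orientation of $G$, and to exploit the fact that members of $F$ are tournaments (so every pair of their vertices is adjacent) to show the pulled-back orientation inherits $F$-freeness.

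More concretely, since $H$ is an $F$-graph, fix an $F$-free orientation $D_H$ of $H$. I would define an orientation $D_G$ of $G$ as follows: for every edge $xy \in E_G$, the homomorphism condition guarantees that $\varphi(x)\varphi(y) \in E_H$ (and in particular $\varphi(x) \neq \varphi(y)$), so the edge $\varphi(x)\varphi(y)$ receives some orientation in $D_H$; orient $xy$ in $D_G$ as $(x,y)$ precisely when $D_H$ orients $\varphi(x)\varphi(y)$ as $(\varphi(x),\varphi(y))$. This is well-defined and yields an orientation of all of $G$.

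It remains to verify that $D_G$ is $F$-free. Suppose, towards a contradiction, that some $T \in F$ appears in $D_G$ on vertices $v_1,\dots,v_k$. Because $T$ is a tournament, every pair $v_iv_j$ with $i\ne j$ is an edge of $G$, so $\varphi(v_i)\varphi(v_j) \in E_H$ and in particular $\varphi(v_i) \ne \varphi(v_j)$; thus $\varphi$ is injective on $\{v_1,\dots,v_k\}$. By the construction of $D_G$, the orientation of each arc between $\varphi(v_i)$ and $\varphi(v_j)$ in $D_H$ matches the corresponding arc of $T$, so the sub-digraph of $D_H$ induced by $\{\varphi(v_1),\dots,\varphi(v_k)\}$ is a copy of $T$, contradicting the $F$-freeness of $D_H$.

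The only subtle point, and the reason the hypothesis "$F$ consists of tournaments" is essential, is the step above ensuring that the image of a forbidden configuration under $\varphi$ is again a copy of that configuration: the tournament property forces all relevant pairs to be edges in $H$, which rules out the potential degeneracies (collapsed vertices, or missing arcs in $D_H$ between images) that would otherwise break the argument. The rest is routine bookkeeping.
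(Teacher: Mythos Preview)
Your proof is correct and follows essentially the same approach as the paper's: pull back an $F$-free orientation of $H$ along $\varphi$, then use that a digraph homomorphism is injective on any tournament in its domain (because adjacent vertices cannot be identified) to transfer a hypothetical forbidden tournament from $D_G$ into $D_H$. The paper phrases the last step by saying that $\varphi$ becomes a digraph homomorphism $D_G\to D_H$ under which every tournament of $D_G$ embeds into $D_H$; your version unpacks this embedding explicitly, but the content is the same.
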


\begin{proof}
  Consider an $F$-free orientation $D_H$ of $H$. We obtain an orientation $D_G$
  of $G$ in the following way, there is an arc $(x,y)$ in $D_G$ if and only if
  $(\varphi(x), \varphi(y))$ is an arc in $D_H$. Since $\varphi$ is a graph
  homomorphism, by the way we chose to orient the edges of $G$, $\varphi$
  induces a digraph homomorphism $\varphi_D \colon D_G \to D_H$. Thus, every
  tournament $T$ in $D_G$, can be embedded in $D_H$. Since $F$ consists of
  tournaments and $D_H$ is an $F$-free orientation of $H$, $D_G$ is also an
  $F$-free orientation of $G$.
\end{proof}

Recall that, if a graph $G$ admits a homomorphism to another graph $H$, we write
$G \to H$, and $G \not \to H$ otherwise. If $\mathcal{F}$ is a set of graphs, we
write $\mathcal{F} \not \to H$, if $G \not \to H$ for every graph $G \in
\mathcal{F}$.

\begin{corollary}
\label{cor:motivation}
  For every set of tournaments $F$, there is a set of graphs $\mathcal{F}$ such
  that for any graph $G$, $G$ admits an $F$-free orientation if and only if
  $\mathcal{F} \not \to G$.
\end{corollary}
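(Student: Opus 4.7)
The plan is to take the obvious (tautological) choice of obstruction family and verify that it works using Proposition~\ref{homduality}. Specifically, I would let $\mathcal{F}$ be the class of all graphs that do \emph{not} admit an $F$-free orientation, and then argue both directions of the biconditional.

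For the forward direction, suppose $G$ admits an $F$-free orientation, and assume for contradiction that there is some $H \in \mathcal{F}$ with a homomorphism $H \to G$. Applying Proposition~\ref{homduality} with the $F$-graph $G$ in place of $H$ and our $H$ in place of $G$, we conclude that $H$ itself admits an $F$-free orientation, contradicting $H \in \mathcal{F}$. Hence $\mathcal{F} \not\to G$.

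For the reverse direction, I would argue the contrapositive: if $G$ does \emph{not} admit an $F$-free orientation, then by definition $G \in \mathcal{F}$, and the identity map $\mathrm{id}_G \colon G \to G$ witnesses that some member of $\mathcal{F}$ maps homomorphically into $G$, so $\mathcal{F} \not\to G$ fails.

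There is essentially no obstacle here, which is the point: the corollary is the abstract shadow of Proposition~\ref{homduality} together with the trivial observation that $G$ maps to itself. The substance, and the place where real work is needed, lies in identifying a \emph{useful} or \emph{minimal} such $\mathcal{F}$ (for instance, the homomorphism-minimal non-$F$-graphs) — this is what the later characterization of $\{T_3\}$-graphs will accomplish, but it is not required by this corollary.
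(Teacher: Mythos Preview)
Your proof is correct and follows exactly the paper's approach: the paper's proof is the single sentence ``By Proposition~\ref{homduality} an example of such a set, is the set of graphs that do not admit an $F$-free orientation,'' and you have simply spelled out the two directions of that tautological choice in full. Your closing remark about the real content lying in finding a useful $\mathcal{F}$ also matches the paper's intent in labeling this corollary as motivation.
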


\begin{proof}
  By Proposition~\ref{homduality} an example of such a set, is the set of graphs
  that do not admit an $F$-free orientation.
\end{proof}

This corollary motivates the characterization we propose of
$Forb_e(T_3)$-graphs: We describe a set of graphs $\mathcal{F}$ such that a
graph $G$ admits a $T_3$-free orientation if and only if $\mathcal{F} \not\to
G$.

We begin by introducing some definitions. Consider two parallel paths on the
plane $P$ and $Q$. A \textit{strip} is a graph $G$ obtained from $P$ and $Q$ as
follows. First, add one edge joining the initial vertices of $P$ and $Q$, and
one joining the end vertices of $P$ and $Q$. Then, triangulate the region
between $P$ and $Q$ in such a way that every new edge is incident with one
vertex in $P$ and one in $Q$.  We call $P$ and $Q$ the \textit{bounding paths}
of $G$. In the top of Figure~\ref{fig:strips}, we illustrate an example of a
strip. 

A \textit{closed strip} is obtained from a strip $G$ with bounding paths $P$ and
$Q$ by identifying the first and final vertices of $P$, and the first and final
vertices of $Q$. Similarly,  a \textit{M\"obius strip} is obtained by identify
the first vertex of $P$ with the final vertex of $Q$, and the first vertex of
$Q$ with the final vertex of $P$. We will abuse nomenclature and call $P$ and
$Q$ the bounding paths of the closed (resp.\ M\"obius) strip --- notice that the
quotients of $P$ and $Q$ are cycles in the corresponding closed strip. In
Figure~\ref{fig:strips}, we depict an example of a closed strip and of a
M\"obius strip.  
\begin{figure}[ht!]
\label{fig:strips}
\begin{center}
\begin{tikzpicture}
 
\begin{scope}[xshift=0cm, scale=0.5]
\node[]  at (-4,0){$P$};
\node[]  at (-4,2){$Q$};

\node [vertex, label = -90:\footnotesize{$p_1$}] (1) at (-3,0){};
\node [vertex, label = 90:\footnotesize{$q_1$}] (2) at (-3,2){};
\node [vertex, label = 90:\footnotesize{$q_2$}] (3) at (-1,2){};
\node [vertex, label = -90:\footnotesize{$p_2$}] (4) at (-1,0){};
\node [vertex, label = -90:\footnotesize{$p_3$}] (5) at (1,0){};
\node [vertex, label = 90:\footnotesize{$q_3$}] (6) at (1,2){};
\node [vertex, label = -90:\footnotesize{$p_4$}] (7) at (3,0){};
\node [vertex, label = 90:\footnotesize{$q_4$}] (8) at (3,2){};
\node [vertex, label = 90:\footnotesize{$q_5$}] (9) at (5,2){};

\foreach \from/\to in {1/2,2/3,3/1,1/4,3/4,4/5,3/5,3/6,5/6,6/7,5/7, 7/8, 7/9, 6/8, 8/9}
\draw [edge] (\from) to (\to);
\end{scope}

\begin{scope}[xshift=-3cm, yshift = -3cm, scale=0.5]

\node [vertex, label = 90:{\footnotesize{$p_1, p_4$}}] (1) at (90:1){};
\node [vertex, label= 0:{\footnotesize{$p_2$}} ] (4) at (-30:1){};
\node [vertex, label= 270:{\footnotesize{$p_3$}}] (5) at (210:1){};
\node (7) at (90:1){};
\foreach \from/\to in {1/4, 4/5, 5/7}
\draw [edge] (\from) to (\to);

\node [vertex, label = 90:{\footnotesize{$q_1, q_5$}}] (2) at (45:3.3){};
\node [vertex, label = -90:{\footnotesize{$q_2$}}] (3) at (-45:3.5){};
\node [vertex, label = -90:{\footnotesize{$q_3$}}] (6) at (225:3.5){};
\node [vertex, label = 90:{\footnotesize{$q_4$}}] (8) at (135:3.5){};
\node (9) at (45:3.5){};
\foreach \from/\to in {2/3, 3/6, 6/8, 8/9}
\draw [edge] (\from) to (\to);

\foreach \from/\to in {1/2, 5/6, 7/8, 3/5, 3/4}
\draw [edge] (\from) to (\to);

\draw [edge] (3) to [bend right] (1);
\draw [edge] (6) to [bend left] (1);

\end{scope}

\begin{scope}[xshift=3cm, yshift = -3cm, scale=0.5]

\node [vertex, label = 90:{\footnotesize{$q_1, p_4$}}] (1) at (90:1){};
\node [vertex, label= 0:{\footnotesize{$p_2$}} ] (4) at (-30:1){};
\node [vertex, label= 270:{\footnotesize{$p_3$}}] (5) at (210:1){};
\node (7) at (90:1){};

\node [vertex, label = 90:{\footnotesize{$p_1, q_5$}}] (2) at (45:3.3){};
\node [vertex, label = -90:{\footnotesize{$q_2$}}] (3) at (-45:3.5){};
\node [vertex, label = -90:{\footnotesize{$q_3$}}] (6) at (225:3.5){};
\node [vertex, label = 90:{\footnotesize{$q_4$}}] (8) at (135:3.5){};
\node (9) at (45:3.5){};

\foreach \from/\to in {2/4, 4/5, 5/7}
\draw [edge] (\from) to (\to);

\foreach \from/\to in {2/3, 3/6, 6/8, 8/9}
\draw [edge] (\from) to (\to);

\foreach \from/\to in {1/2, 5/6, 7/8, 3/5, 3/4}
\draw [edge] (\from) to (\to);

\draw [edge] (3) to [bend right] (1);
\draw [edge] (6) to [bend left] (1);
\end{scope}

\end{tikzpicture}
\end{center}
\caption{On the top, an example of a strip $S$ with bounding paths $P$ and $Q$. 
On the bottom, an example of a closed strip (left) and an example of a M\"obius
strip (right) obtained from $S$.}
\end{figure}

Allow us to discuss some particular cases of strips. Suppose that one of the
bounding paths of a strip $S$ is trivial, i.e., it is a path on one vertex. In
this case, the closed strip obtained from $S$ is a wheel. An \textit{even strip}
is a strip with an even number of triangles; otherwise we say it is an
\textit{odd strip}.  Similarly, an \textit{even} closed (resp.\ M\"obius) strip
is a closed (resp.\ M\"obius) strip obtained from an even strip;  otherwise we
say it is an \textit{odd} closed (resp.\ M\"obius) strip. It is not hard to
notice that the number of triangles in a strip $S$ equals the number of
$PQ$-edges minus $1$ (where $P$ and $Q$ are the bounding paths of $S$). Thus,  a
closed (resp.\ M\"obius) strip with bounding paths $P$ and $Q$ is even if and
only if there an even number of $PQ$-edges.

\begin{lemma}
\label{lem:noeven-odd}
  Let $G$ be a graph. If there is a homomorphism $S\to G$ where $S$ is an odd
  closed strip or an even M\"obius strip, then $G$ does not admit a $T_3$-free
  orientation.
\end{lemma}

\begin{proof}
  By Proposition~\ref{homduality}, it suffices to show that neither odd closed
  strips nor even M\"obius strips admit a $T_3$-free orientation. Consider a
  closed strip $S$ with bounding paths $P$ and $Q$, and let $e_0, \dots,
  e_{n-1}, e_0$ be the $PQ$-edges indexed according to a clockwise ordering.
  Suppose that $S$ admits  a $T_3$-free orientation $S'$. Since all triangles in
  $S$ must be oriented cyclically in $S'$, if $e_i$ is oriented from $P$ to $Q$,
  then $e_{i+1}$ must be oriented from $Q$ to $P$ (indices taken modulo $n$).
  Inductively, $e_0$ forces an orientation of $e_{n-1}$, which by the previous
  argument, must be opposite to the orientation of $e_0$. These restrictions are
  compatible if and only if $S$ is an even closed strip. Thus, odd closed strips
  do no admit a $T_3$-free orientation. With similar arguments we see that a
  M\"obius strip is $T_3$-free orientable if and only if it is an odd M\"obius
  strip. The claim follows.
\end{proof}

Our characterization of $T_3$-free orientable graphs asserts that the converse
implication of Lemma~\ref{lem:noeven-odd} holds. To do so, it will be convenient
to describe homomorphisms from closed strips to a graph $G$ by means of certain
sequences of edges in $G$.

Notice that a strip $S$ with bounding paths $P$ and $Q$, can be described by its
sequence of $PQ$-edges (ordered from left to right), and by indicating for each
of these edges which end vertex belongs to $P$ and which to $Q$. To be precise,
we represent a strip $S$ as a sequence of edges $p_1q_1,\dots, p_nq_n$ with the
following properties: The sets  $\{p_1,\dots, p_n\}$ and $\{q_1,\dots q_n\}$
induce two disjoint paths; the intersection of $\{p_i,q_i\}$ and
$\{p_{i+1},q_{i+1}\}$ is $\{p_i\}$ or $\{q_i\}$; and either $p_i = p_{i+1}$ (so
$p_i$ is considered to be a pivot) or $p_ip_{i+1}$ is an edge. For instance, the
sequence $ax,~ay,~by,~cy,~cz$ represents the following strip.

\begin{center}
\begin{tikzpicture}

\begin{scope}[xshift=0cm, scale=0.5]
\node[]  at (-4,0){$P$};
\node[]  at (-4,2){$Q$};

\node [vertex, label = -90:\footnotesize{$a$}] (1) at (-3,0){};
\node [vertex, label = 90:\footnotesize{$x$}] (2) at (-3,2){};
\node [vertex, label = 90:\footnotesize{$y$}] (3) at (-1,2){};
\node [vertex, label = -90:\footnotesize{$b$}] (4) at (-1,0){};
\node [vertex, label = -90:\footnotesize{$c$}] (5) at (1,0){};
\node [vertex, label = 90:\footnotesize{$z$}] (6) at (1,2){};

\foreach \from/\to in {1/2,2/3,3/1,1/4,3/4,4/5,3/5,3/6,5/6}
\draw [edge] (\from) to (\to);
\end{scope}
\end{tikzpicture}
\end{center}

This representation of strips by means of edge sequences, provides a simple way
of describing homomorphisms from strips to graphs. Consider a graph $G$ and let
$p_1q_1,\dots, p_nq_n$ be a sequence of edges of $G$. This sequence defines a
homomorphism from a strip $S$ to $G$ if for every $i\in\{1,\dots, n-1\}$ the
following statements hold:
\begin{enumerate}
	\item  Either $p_i = p_{i+1}$ and $q_iq_{i+1}\in E(G)$, or $q_i = q_{i+1}$
	  and $p_ip_{i+1}\in E(G)$.
	\item If $p_n = p_1$ and $q_n = q_1$, then the sequence defines a homomorphism
	  from a closed strip to $G$. 
	\item If $p_n = q_1$ and $q_n = p_1$, then the sequence defines a homomorphism
	  from a M\"obius strip to $G$. 
\end{enumerate}
Moreover, the parity of the corresponding strip is the same as the parity of
$n-1$ (the length of the sequence).

Recall that $D^+ = (V^+,A^+)$ denotes the constraint digraph defined in
Section~\ref{sec:Algorithm}. By definition of $A^+$, it follows that if
$F=\{T_3\}$, then every arc in $A^+$ is symmetric. Thus, we may think of $D^+$
as a graph, and so, for any graph $G$ we denote by $G^+$ the constraint graph of
$G$ and $\{T_3\}$. Now, we observe that paths in $G^+$ translate to
homomorphisms of strips to $G$. To do so, we will use the previous description
of homomorphism from strips.

\begin{lemma}
\label{lem:hom-strips}
  Consider a graph $G$ and a pair  $xy$ and $zw$ of edges in $G$. If there is an
  $(x,y)(z,w)$-path of  even (resp.\ odd) length in $G^+$, then there is a
  sequence $p_1q_1,\dots, p_nq_n$ of edges of $G$ that defines a homomorphism
  from some odd (resp.\ even) strip to $G$. Furthermore, we can choose $p_1 =
  x$, $q_1 = y$, and $p_n = z$ and $q_n = w$ (resp.\ $p_n = w$ and $q_n = z$).
\end{lemma}

\begin{proof}
  We proceed by induction over the length of the  $(x,y)(z,w)$-path. Regarding
  the furthermore statement, in this paragraph we show that $\{p_1,q_1\} =
  \{x,y\}$ and $\{z,w\} = \{p_n,q_n\}$ --- we take care of the vertex equalities
  in the second paragraph. The base case is when the $(x,y)(z,w)$-path is an
  edge. Thus, by definition of $E^+$, the set vertices $\{x,y,z,w\}$ induces a
  triangle and the claim is obvious. Consider now an $(x,y)(z,w)$-path $W$ of
  length at least two, and let $(a,b)$ be the vertex before $(z,w)$ in $W$. So,
  there is an odd (resp.\ even) sequence $S = p_1q_1, \dots, p_{n-1}q_{n-1}$ of
  edges of $G$, such that $\{p_1,q_1\} = \{x,y\}$ and $\{p_{n-1},q_{n-1}\} =
  \{a,b\}$. Since there is an edge $(a,b)(z,w)$ in $G^+$, then $\{a,b,z,w\}$
  induces a triangle in $G$, so $\{a,b\}\cap \{z,w\} = \{v\}$. We extend the
  previous sequence $S$ to an even (resp.\ odd) sequence $S' = p_1q_1, \dots,
  p_{n-1}q_{n-1}, p_nq_n$ where $p_n$ and $q_n$ are defined depending on the
  value of $v$. Let $u$ be the unique vertex in $\{z,w\}\setminus\{v\}$. If $v =
  p_{n-1}$ then $p_n = v$ and $q_n = u$; otherwise $p_n = u$ and $q_n = v$. The
  fact that $S'$ defines a homomorphism from an even (resp.\ odd) strip follows
  from the choice of $p_n$ and $q_n$, and from the fact that  $\{a,b,z,w\}$
  induces a triangle (and from the induction hypothesis).

  To prove the furthermore statement, first notice that we can choose, without
  loss of generality, $p_1 = x$ and $q_1 = y$. Given this choice, we follow a
  similar inductive argument as above to verify that $p_n = z$ and $q_n = w$
  (resp.\ $p_n = w$ and $q_n = z$). Let $W$ be the $(x,y)(z,w)$-path in $G^+$,
  and $S$ the constructed sequence of edges of $G$. By construction of $S$, for
  each edge $p_iq_i$ in $S$, there is a vertex $v_i$ in $W$ representing one
  orientation of $p_iq_i$, i.e., $v_i$ equals $(p_i,q_i)$ or equals $(q_i,p_i)$.
  With an inductive argument we can notice that the orientation of $p_iq_i$
  represented by $v_i$ depends on the parity of $i$, i.e., the first edge is
  represented by the orientation from $p_1$ to $q_1$ (by assumption), the second
  one from $q_2$ to $p_2$ (by definition of $E^+$), and inductively $v_i =
  (p_i,q_i)$ when $i$ is odd, and $v_i = (q_i,p_i)$ when $i$ is even (also by
  definition of $E^+$). Finally, since $W$ is an $(x,y)(z,w)$-path and its
  length $\ell(W)$ is $n-1$, then $(z,w) = (q_n,p_n)$ if $\ell(W)$ is odd, and
  $(z,w) = (p_n,q_n)$ if $\ell(W)$ is even. 
\end{proof}

To prove our main result, recall that $G$ admits a $T_3$-free orientation if and
only if for each edge $xy\in E(G)$ the vertices $(x,y),(y,x)\in V^+$ are in
different connected components of $G^+$ (Theorem~\ref{thm:algorithm1}).
 
\begin{theorem}
\label{thm:T3-free}
  Let $\mathcal{F}$ be the set of all odd closed strips and even M\"obius
  strips. For a graph $G$, the following statements are equivalent:
  \begin{enumerate}
    \item $G$ admits a $T_3$-free orientation.
    \item There is no homomorphism $F\to G$ whenever $F\in \mathcal{F}$.
    \item There is no homomorphism from an odd closed strip to $G$.
    \item There is no homomorphism from an even M\"obius strip to $G$.
  \end{enumerate}
\end{theorem}

\begin{proof}
  Clearly, the second statement is equivalent to the conjunction of the third
  and fourth statements. So, by showing that the third and fourth statements are
  equivalent, we will prove that the second, third and fourth statements are
  equivalent.  Also, by Lemma~\ref{lem:noeven-odd}, the first statement implies
  the remaining statements. To conclude the proof we will show that the second
  statement implies the first one.

  To prove the equivalence between the last two statements, it suffices to show
  that for any odd closed (resp.\ even M\"obius) strip $S$, there is an even
  M\"obius (resp.\ odd M\"obius) strip $S'$ such that $S'\to S$. Suppose that
  $S$ is an odd closed strip, and let $S_0$ be an odd strip such that $S$ is a
  quotient of $S_0$. Let $P = p_1\dots p_n$ and $Q = q_1\dots q_m$ be  the
  bounding paths of $S_0$. Recall that the region between $P$ and $Q$ is
  triangulated in $S_0$. This implies that either $p_{n-1}q_m\in E(S_0)$ or
  $q_{m-1}p_n\in E(S_0)$; without loss of generality we assume that
  $p_{n-1}q_m\in E(S_0)$. Consider the even strip $S_1$ with bounding paths $R =
  r_1\dots r_n$ and $T = t_1\dots t_{m+1}$ with the following adjacencies. For
  each $i\in\{1,\dots, n-1\}$ and $j\in \{1,\dots, m-1\}$ there is an edge
  $r_it_j$ if and only if there is an edge $p_iq_j$. The remaining $RT$-edges
  are $r_nt_{m-1}$, $r_nt_m$ and $r_nt_{m+1}$. Clearly, $S_1$ has exactly one
  more triangle than $S_0$, so the M\"obius strip $S'$ defined by $S_1$ is an
  odd M\"obius strip. To see that $S'\to S$, first consider the mapping
  $\varphi\colon V(S_1)\to V(S_0)$ defined by $r_i\mapsto p_i$ and $t_j\mapsto
  q_j$ for every $i\in\{1,\dots, n-1\}$ and $j\in\{1,\dots m-1\}$, and
  $r_n\mapsto q_m$, $t_m\mapsto p_{n-1}$ and $t_{m+1}\mapsto p_n$. We illustrate
  this mapping and construction as follows.
\begin{center}
\begin{tikzpicture}
 
\begin{scope}[xshift=3cm, scale=0.5]
\node[]  at (2,0){$P$};
\node[]  at (2,2){$Q$};

\node [vertex, label = -90:\footnotesize{$p_1$}] (1) at (-4.5,0){};
\node [vertex, label = 90:\footnotesize{$q_1$}] (2) at (-4.5,2){};
\node [] (l) at (-4.5,1){};

\node [vertex, label = 90:\footnotesize{$q_{m-1}$}] (3) at (-1,2){};
\node [vertex, label = -90:\footnotesize{$p_{n-1}$}] (4) at (-1,0){};
\node [] (m) at (-1,1){};
\node [vertex, label = -90:\footnotesize{$p_n$}] (5) at (1,0){};
\node [vertex, label = 90:\footnotesize{$q_m$}] (6) at (1,2){};

\foreach \from/\to in {1/2, 3/4,4/5,4/6,3/6,5/6}
\draw [edge] (\from) to (\to);

\foreach \from/\to in {l/m, 1/4, 2/3}
\draw [edge, dotted] (\from) to (\to);

\end{scope}
\node[] (left) at (-2.5,0.5){};
\node[] (right) at (0.5,0.5){};
\node[]  at (-1,1){\tiny{$t_m\mapsto p_{n-1}$}};
\node[]  at (-1,0.2){\tiny{$r_n\mapsto q_m$}};
\node[]  at (-1,0.7){\tiny{$t_{m+1}\mapsto p_n$}};
\draw [arc, thin] (left) to (right);
\begin{scope}[xshift=-4cm, scale=0.5]
\node[]  at (-6,0){$R$};
\node[]  at (-6,2){$T$};
\node[] (l) at (-4.5,1){};

\node [vertex, label = -90:\footnotesize{$r_1$}] (1) at (-4.5,0){};
\node [vertex, label = 90:\footnotesize{$t_1$}] (2) at (-4.5,2){};

\node [vertex, label = 90:\footnotesize{$t_{m-1}$}] (3) at (-1,2){};
\node [vertex, label = -90:\footnotesize{$r_{n-1}$}] (4) at (-1,0){};
\node[] (m) at (-1,1){};

\node [vertex, label = -90:\footnotesize{$r_n$}] (5) at (1,0){};
\node [vertex, label = 90:\footnotesize{$t_m$}] (6) at (1,2){};

\node [vertex, label = 90:\footnotesize{$t_{m+1}$}] (7) at (3,2){};

\foreach \from/\to in {l/m, 1/4, 2/3}
\draw [edge, dotted] (\from) to (\to);

\foreach \from/\to in {1/2,3/4,4/5,3/5,3/6,5/6, 5/7, 6/7}
\draw [edge] (\from) to (\to);
\end{scope}
\end{tikzpicture}
\end{center}
  By definition of the edge set of $S_1$, the mapping $\varphi$ is a
  homomorphism from $S_1$ to $S_0$.  Furthermore, notice that $S'$ is defined
  from $S_1$ by identifying $r_n$ and $t_1$, and identifying $t_{m+1}$ and
  $r_1$. Also, $S$ is defined from $S_0$ by identifying $p_n$ and $p_1$, and
  $q_m$ and $q_1$. It is not hard to notice that $\varphi$ commutes with these
  vertex identifications. Thus, the mapping $\varphi$ factors to a homomorphism
  $S\to S'$. This shows that for every odd closed strip $S$, there is an even
  M\"obius strip $S'$ such that $S'\to S$. With a similar construction one can
  show that for every even M\"obius strip $S'$, there is an odd closed strip $S$
  such that $S\to S'$. Therefore, the third and fourth statements are
  equivalent.

  To conclude the proof we show that the second statement implies the first one.
  To do so, we prove the contrapositive statement. Suppose that $G$ does not
  admit a $T_3$-free orientation. So, by Lemma~\ref{lem:hom-strips}, there is an
  $(x,y)(y,x)$-path in $G^+$ for some edge $xy$ of $G$. Thus, applying
  Lemma~\ref{lem:hom-strips} to $(x,y)$ and $(y,x)$ yields an edge sequence
  $p_1q_1,\dots, p_nq_n$ of $G$ such that $p_1 = x$ and $q_1 = y$, and either
  $p_n = y$ and $q_n = x$ if $n$ is even, or $p_n = x$ and $q_n = y$. So, if the
  length of the edge sequence $S$ is even, then $S$ defines a homomorphism from
  an even M\"obius strip to $G$, and if the length is odd, then $S$ defines a
  homomorphism from an odd closed strip to $G$. Therefore, $G$ does not admits a
  $T_3$-free orientation then $F\to G$ for some $F\in \mathcal{F}$. This
  concludes the proof.
\end{proof}

\section{Proof of Theorem~\ref{thm:main}}
\label{sec:proof}

We first prove that for each class $\mathcal{C}$ listed in
Theorem~\ref{thm:main}, there is a set of oriented graphs on three vertices $F$,
such that $\mathcal{C}$ is the class of $Forb_e(F)$-graphs. Then, we show that
for each set $F$ of oriented graphs on three vertices, there is a class
$\mathcal{C}$ listed in Theorem~\ref{thm:main}, such that
$\mathcal{U}_{Forb_e(F)}$ equals $\mathcal{C}$, equals the intersection of
$\mathcal{C}$ and triangle-free graphs, or equals the intersection of
$\mathcal{C}$ and complete-multipartite graphs.

\subsection{Part 1}

We list the graphs in the same order as in Theorem~\ref{thm:main}. In each case
we provide the set of forbidden oriented graphs, together with the corresponding
reference. As mentioned before, we have no characterization for two of these
classes. 

\noindent
1. \textbf{Perfectly orientable graphs.}
By definition \cite{skrienJGT6}, these graphs are $Forb_e(B_1)$-graphs, and
equivalently $Forb_e(B_2)$-graphs.\\

\noindent
2. \textbf{Comparability graphs.}
These graphs are $Forb_e(B_3)$-graphs \cite{skrienJGT6}. This class also
corresponds to $Forb_e(B_3,\overrightarrow{C}_3)$
(definition).\\

\noindent
3. \textbf{Odd closed strip hom.-free graphs.}
These graphs are $Forb_e(T_3)$-graphs (Theorem~\ref{thm:T3-free}).\\

\noindent
4. \textbf{Disjount union of proper circular-arc graphs.}
These graphs are $Forb_e(B_1,B_2)$-graphs \cite{skrienJGT6}
--- see Table~\ref{Tab:skrien}.\\

\noindent
5. \textbf{Trivially perfect graphs.}
These graphs are $Forb_e(B_1,B_2)$-graphs \cite{skrienJGT6} --
see Table~\ref{Tab:skrien}.\\

\noindent
6. \textbf{Transitive-perfectly orientable graphs}
By definition, these graphs are $Forb_e(B_1,\overrightarrow{C}_3)$-graphs,
and equivalently $Forb_e(B_2,\overrightarrow{C}_3)$-graphs.\\

\noindent
7. \textbf{Disjoint union of unicyclic graphs.}
These graphs are $Forb_e(B_1,T_3)$-graphs, and equivalently
$Forb_e(B_2,T_3)$-graphs (Proposition~\ref{B1T3-free}).\\

\noindent
8. \textbf{Disjoint union of triangle-free unicyclic graphs.}
These graphs are $Forb_e(B_1,T_3, \overrightarrow{C}_3)$-graphs, and
equivalently $Forb_e(B_2,T_3, \overrightarrow{C}_3)$-graphs
(Direct implication of Proposition~\ref{B1T3-free}).\\

\noindent
9. \textbf{3-colourable comparability graphs.}
These graphs are $Forb_e(B_3,T_3)$-graphs
(Proposition~\ref{B3T3-free}).\\

\noindent
10. \textbf{Triangle-free graphs.}
These graphs are $Forb_e(\overrightarrow{C}_3,T_3)$-graphs
(trivial).\\

\noindent
11. \textbf{Clusters.}
These graphs are $Forb_e(B_1,B_2,B_3)$-graphs (trivial).\\

\noindent
12. \textbf{Disjoint union of proper Helly circular-arc graphs.}
These graphs are $Forb_e(B_1,B_2,\overrightarrow{C}_3)$-graphs
(Proposition~\ref{B1B2C3-free}).\\

\noindent
13. \textbf{Disjoint union of triangle-free proper circular-arc graphs.}
These graphs are $Forb_e(B_1,B_2,T_3,\overrightarrow{C}_3)$-graphs
(Implication of Lemma~\ref{lem:intersection} and \cite{skrienJGT6}
-- see Table~\ref{Tab:skrien}).\\

\noindent
14. \textbf{Disjoint unions of paths and cycles.}
These graphs are $Forb_e(B_1,B_2,T_3)$-graphs
(Proposition~\ref{B1B2T3-free}).\\

\noindent
15. \textbf{Disjoint unions of paths and cycles but no triangles.}
These graphs are $Forb_e(B_1,B_2,T_3, \overrightarrow{C}_3)$-graphs
(Direct implication of Proposition~\ref{B1B2T3-free}).\\

\noindent
16. \textbf{Disjoint union of stars and triangles.}
These graphs are $Forb_e(B_1,B_3,T_3)$-graphs
(Proposition~\ref{B1B3T3-free}).\\

\noindent
17. \textbf{Star Forests.}
These graphs are $Forb_e(B_1,B_3,T_3, \overrightarrow{C}_3)$-%
graphs (Corollary~\ref{B1B3C3T3-free}).\\

\noindent
18. \textbf{Stars and empty graphs.}
These graphs are $Forb_e(B_1,B_3,T_3, \overrightarrow{C}_3, T_1+T_2)$-%
graphs (Lemma~\ref{lem:intersection} and Corollary~\ref{B1B3C3T3-free}).\\

\noindent
19. \textbf{Matchings with isolated vertices.}
These graphs are $Forb_e(B_1,B_2,B_3,T_3, \overrightarrow{C}_3)$-graphs 
(trivial).\\

\noindent
20. \textbf{Empty graphs and $\mathbf{K_2}$.}
These graphs are
$Forb_e(B_1,B_2,B_3,T_3,\overrightarrow{C}_3, T_1+T_2)$-graphs (trivial).\\

\noindent
21. \textbf{Bipartite graphs.}
These graphs are $Forb_e(B_3,T_3, \overrightarrow{C}_3)$-graphs 
(Proposition~\ref{B3C3T3-free}).\\

\noindent
22. \textbf{Complete bipartite graphs.}
These graphs are $Forb_e(\{T_1+ T_2,T_3,
\overrightarrow{C}_3)$-graphs (trivial).\\

\noindent
23. \textbf{Complete 3-partite graphs.}
These graphs are $Forb_e(T_1+ T_2,T_3)$-graphs
(Corollary~\ref{K1K2T3-free}).\\

\noindent
24. \textbf{$\mathbf{K_{2,3}}$-free complete multipartite graphs.}
These graphs are $Forb_e(B_1,T_1+ T_2)$-graphs, or equivalently
$Forb_e(B_2,T_1+ T_2)$-graphs (Proposition~\ref{B1T1+T2-free}).\\

\noindent
25. \textbf{Complete multipartite graphs.}
These graphs are $Forb_e(T_1+ T_2)$-graphs
(Lemma~\ref{lem:intersection}).\\

\noindent
26. \textbf{All graphs.}
These graphs are $Forb_e(\overrightarrow{C}_3)$-graphs
(every graph admits an acyclic orientation).\\

\noindent
\textbf{Intersection with complete multipartite graphs.}
So far, we have shown that each class $\mathcal{C}$ listed in
Theorem~\ref{thm:main}, is a class of $Forb_e(F)$-graphs for some finite set $F$
of non-empty oriented graphs on three vertices. The fact that the intersection
of $\mathcal{C}$ and complete multipartite graphs is a class of
$Forb_e(F)$-graphs (for some finite set $F$ of non-empty oriented graphs on
three vertices) follows from Lemma~\ref{lem:intersection}.

\subsection{Part 2}

We present this part of the proof as a series of tables. The leftmost column of
each table lists graphs in the forbidden set $F$; the mid-column contains the
name of the class $\mathcal{U}_{Forb_e(F)}$ and the corresponding list number of
Theorem~\ref{thm:main} --- an asterisks means that the class is finite; and the
last column contains the corresponding reference. We begin with those sets that
contain exactly one graph.

\begin{table}[ht!] 
\begin{center}
    \begin{tabular}{| c | l | l |}
    \hline
    Oriented graphs in $F$ & $Forb_e(F)$-graphs & Reference \\ \hline
    $B_1$ & 1. Perfectly orientable graphs & Definition \cite{skrienJGT6} \\
    \hline
    $B_2$ & 1. Perfectly orientable graphs & Definition \cite{skrienJGT6} \\
    \hline
    $B_3$ & 2. Comparability graphs &  Skrien \cite{skrienJGT6} \\ \hline
    $\overrightarrow{C_3}$ & 21. All graphs & Trivial \\ \hline
    $T_3$ & 3. Odd closed strip hom.\-free graphs & Theorem~\ref{thm:T3-free} \\
    \hline
    $T_1+ T_2$ &  19. Complete multipartite graphs &
    Trivial\\
    \hline
    \end{tabular}
    \caption{Sets containing one oriented graphs on three vertices.}
    \end{center}
  \end{table}

From now on, we only consider sets that do not contain $T_1+ T_2$. We will treat
these cases separately. 

\begin{table}[ht!] 
\begin{center}
    \begin{tabular}{| c | l | l |}
    \hline
    Oriented graphs in $F$ & $Forb_e(F)$-graphs & Reference \\
    \hline
    $B_1,B_2$ &  4. Proper circular-arc graphs & Skrien \cite{skrienJGT6}\\
    \hline
    $B_1,B_3$ &  5. Trivially perfect graphs & Skrien \cite{skrienJGT6}\\
    \hline
    $B_1,\overrightarrow{C_3}$ & 6. Transitive-perfectly orientable graphs &
    Definition \\
    \hline
    $B_1,T_3$ & 7. Disjoint union of unicyclic graphs &
    Proposition~\ref{B1T3-free}\\
    \hline
    $B_2,B_3$ &  5. Trivially perfect graphs & Skrien \cite{skrienJGT6}\\
    \hline
    $B_2,\overrightarrow{C_3}$ & 6. Transitive-perfectly orientable graphs &
    Definition \\
    \hline
    $B_2,T_3$ & 7. Disjoint union of unicyclic graphs &
    Proposition~\ref{B1T3-free}\\
    \hline
    $B_3,\overrightarrow{C_3}$ & 2. Comparability graphs & Definiton\\ \hline
    $B_3,T_3$ & 9. $3$-colourable comparability graphs &
    Proposition~\ref{B3T3-free}\\ \hline
    $\overrightarrow{C_3},T_3$ & 10. Triangle-free graphs & Trivial \\ \hline
    \end{tabular}
    \caption{Sets containing two oriented graphs on three vertices, but not
    $T_1 + T_2$.}
    \end{center}
  \end{table}

\begin{table}[ht!] 
\begin{center}
    \begin{tabular}{| c | l | l |}
    \hline
    Oriented graphs in $F$ & $Forb_e(F)$-graphs & Reference \\
    \hline
    $B_1, B_2, B_3$ & 11. Clusters. & Trivial. \\
    \hline
    $B_1,B_2,\overrightarrow{C_3}$ & 12. Proper Helly circular-arc graphs.&
    Proposition~\ref{B1B2C3-free}.\\
    \hline
    $B_1,B_2,T_3$ & 13. Disjoint union of paths and cycles.
    & Proposition~\ref{B1B2T3-free}.\\
    \hline
    $B_1,B_3,\overrightarrow{C_3}$ & 5. Trivially perfect graphs. &
    Proposition~\ref{B1B3C3-free}.\\
    \hline
    $B_1,B_3,T_3$ & 14. Disjoint union of triangles and stars.&
    Proposition~\ref{B1B3T3-free}.\\
    \hline
    $B_1,\overrightarrow{C_3},T_3$ & 8. Disjoint union of triangle-free
    unicyclic graphs.
    & Corollary~\ref{B1C3T3-free}.\\
    \hline
    $B_2,B_3,\overrightarrow{C_3}$ & 5. Trivially perfect graphs. &
    Proposition~\ref{B1B3C3-free}.\\
    \hline
    $B_2,B_3,T_3$ & 14. Disjoint union of triangles and stars.&
    Proposition~\ref{B1B3T3-free}.\\
    \hline
    $B_2,\overrightarrow{C_3},T_3$ & 8. Disjoint union of triangle-free
    unicyclic graphs.
    & Corollary~\ref{B1C3T3-free}.\\
    \hline
    $B_3,\overrightarrow{C_3},T_3$ & 16. Bipartite graphs. &
    Proposition~\ref{B3C3T3-free}\\
    \hline
    \end{tabular}
    \caption{Sets containing three oriented graphs on three vertices, but
    not $T_1+T_2$.}
    \end{center}
  \end{table}

\begin{table}[ht!] 
\begin{center}
    \begin{tabular}{| c | l | l |}
      \hline
      Oriented graphs in $F$ & $Forb_e(F)$-graphs & Reference \\
      \hline
      $B_1,B_2,B_3,\overrightarrow{C_3}$ & 11. Clusters & Trivial \\
      \hline
      $B_1,B_2,B_3,T_3,$ & $\ast$ $K_3,~K_2$ and $K_1$ & Trivial \\
      \hline
      $B_1,B_2,\overrightarrow{C_3}, T_3$ & 11. D.u.o.\ triangle-free proper
      circular-arc graphs & Lemma~\ref{lem:intersection} +
      Table~\ref{Tab:skrien} \\
      \hline
      $B_1,B_3,\overrightarrow{C_3}, T_3$ & Star forest &
      Corollary~\ref{B1B3C3T3-free} \\
      \hline
      $B_2,B_3,\overrightarrow{C_3}, T_3$ &Star forest &
      Corollary~\ref{B1B3C3T3-free} \\
      \hline
      $B_1,B_2,B_3,\overrightarrow{C_3}, T_3$ & Matchings with isolated vertices
      & Lemma~\ref{lem:intersection} \\
      \hline
    \end{tabular}
    \caption{Sets containing four or five oriented graphs on three vertices, but
    not $T_1+T_2$ nor both orientations of the triangle.}
    \end{center}
  \end{table}

The tables displayed in this section, show that if  $(T_1+T_2)\not \in F$ or
$|F| = 1$, then the class of $Forb_e(F)$-graphs is either finite or listed in
Theorem~\ref{thm:main}. By Lemma~\ref{lem:intersection}, if $(T_1+T_2)\in F$,
then the class $\mathcal{U}_{Forb_e(F)}$ is the intersection of
$\mathcal{U}_{Forb_e(F-(T_1+T_2)}$ and complete multipartite graphs. Thus, the
claim of Theorem~\ref{thm:main} holds.
\pagebreak


\section{Complete multipartite graphs}
\label{sec:completemultipartite}

For the sake of completeness, we comment on the intersection of classes listed
in Theorem~\ref{thm:main} and complete multipartite graphs. 

\begin{proposition}
\label{B1C3T+T2-free}
  For a complete multipartite graph, the following statements are equivalent:
  \begin{enumerate}
    \item $G$ admits a $\{B_1,\overrightarrow{C}_3,T_1+T_2\}$-free orientation.
    \item $G$ admits a $\{B_2,\overrightarrow{C}_3,T_1+T_2\}$-free orientation.
    \item $G$ is a transitive-perfectly orientable graph. 
    \item $G$ is a complete graph, a complete graph minus two non-incident
      edges, or a complete split graph.
    \item $G$ is a $\{K_{2,3},K_{2,2,2}\}$-free complete
    multipartite graph.
  \end{enumerate}
\end{proposition}

\begin{proof}
  The equivalence between first three items follow from
  Proposition~\ref{B1B2C3-free} and from Lemma~\ref{lem:intersection}. The last
  two statements are evidently equivalent. It is straighforward to find a
  $\{B_1,\overrightarrow{C}_3,T_1+T_2\}$-free orientation of a graph described
  in item 4, so the fourth statement implies the first three. On the other hand,
  it is not hard to notice that neither $K_{2,3}$ nor $K_{2,2,2}$ admit a
  $\{B_1,\overrightarrow{C}_3,T_1+T_2\}$-free orientation, so the first
  statement implies the last two. This concludes the proof.
\end{proof}

\begin{proposition}
\label{B1B2C3T+T2-free}
  For a complete multipartite graph, the following statements are equivalent:
  \begin{enumerate}
    \item $G$ admits a $\{B_1,B_2,\overrightarrow{C}_3,T_1+T_2\}$-free
      orientation.
    \item $G$ is a proper Helly circular-arc graph. 
    \item $G$ is an empty graph, a complete graph, a complete graph minus an
      edge, or $C_4$.
    \item $G$ is a $\{K_{1,3}, K_{1,2,2}\}$-free complete multipartite graph.
  \end{enumerate}
\end{proposition}

\begin{proof}
  The equivalence between the first two items follows from
  Proposition~\ref{B1B2C3-free} and Lemma~\ref{lem:intersection}. The last two
  statements are evidently equivalent. It is immediate to find a
  $\{B_1,B_2,\overrightarrow{C}_3,T_1+T_2\}$-free orientation of a graph listed
  in the third item. So,  the third statement implies the first two. Finally, it
  is not hard to notice that the $K_{1,3}$ is not a proper circular-arc graph,
  and $K_{1,2,2}$ is the $4$-wheel which is not a proper Helly circular-arc
  graph \cite{linDAM2013}. Thus, the second statement implies the last two. The
  claim follows.
\end{proof}

\begin{proposition}
\label{B1B2T+T2-free}
  For a complete multipartite graph, the following statements are equivalent:
  \begin{enumerate}
    \item $G$ is a proper-circular arc graph.
    \item $G$ admits a $\{B_1,B_2,T_1+T_2\}$-free orientation. 
    \item $G$ is an empty graphs, a complete graph, a complete graph minus an
      edge, or a complete graph minus two non-incident edges. 
    \item $G$ is a $\{K_{1,3},K_{2,2,2}\}$-free complete multipartite graph.
  \end{enumerate}
\end{proposition}

\begin{proof}
  The equivalence between the first two items follows from
  Table~\ref{Tab:skrien} and Lemma~\ref{lem:intersection}. The last two
  statements are evidently equivalent. On the one hand, it is immediate to find
  a $\{B_1,B_2,T_1+T_2\}$-free orientation of an empty graph, a complete graph,
  a complete graph minus and edge or a complete graph minus two non-incident
  edges. On the other one, it is not hard to see that neither $K_{1,3}$ nor
  $K_{2,2,2}$ admit such an orientation. So, the equivalence between the four
  statements holds.
\end{proof}

\begin{proposition}
\label{B1B3C3T+T2-free}
  For a graph $G$, the following statements are equivalent:
  \begin{enumerate}
    \item $G$ is a trivially perfect complete multipartite graph.
    \item $G$ admits a $\{B_1,B_3,T_1+T_2\}$-free orientation. 
    \item $G$ admits a $\{B_1,B_3,\overrightarrow{C}_3,T_1+T_2\}$-free
      orientation.
    \item $G$ is a $C_4$-free complete multipartite graph. 
    \item $G$ is an empty graph, a complete graph or a complete graph minus an
      edge.
  \end{enumerate}
\end{proposition}

\begin{proof}
  The equivalence between the first three items follows from
  Lemma~\ref{lem:intersection} and Proposition~\ref{B1B3C3-free}. The
  equivalence between the fourth and fifth statements is immediate. Finally,
  recall that trivially perfect graphs are $\{C_4,P_4\}$-free graphs
  \cite{golumbicDM24}. Thus, since every complete multipartite graph is
  $P_4$-free, we conclude that the first and fourth statements are equivalent,
  which concludes the proof.
\end{proof}

\begin{theorem}
  The following classes are all infinite families of $Forb_e(F)$-graphs, where
  $F$ is a set of non-empty oriented graphs on three vertices and $Forb_e(F)$ is
  a subclass of complete multipartite graphs.
  \begin{multicols}{2}
  \begin{enumerate}
    \item Complete multipartite graphs.
    \item Complete $3$-partite graphs.
    \item Complete bipartite graphs.
    \item $K_{2,3}$-free complete multipartite graphs.
    \item $\{K_{2,3}, K_{2,2,2}\}$-free complete multipartite graphs.
    \item $\{K_{2,3}, K_{1,2,2}\}$-free complete multipartite graphs.
    \item $C_4$-free complete multipartite graphs.
    \item Complete graphs and empty graphs.
    \item Empty graphs, stars, $C_3$ and $C_4$.
    \item Empty graphs, stars and $C_4$.
    \item Empty graphs, stars and $C_3$.
    \item Empty graphs and stars.
    \item Empty graphs and finitely many graphs.
  \end{enumerate}
  \end{multicols}
\end{theorem}

\begin{proof}
  We show that the intersection of complete multipartite graphs and each class
  is the list of Theorem~\ref{thm:main} is listed above. We proceed according to
  the listing order in Theorem~\ref{thm:main}.

  \textit{(Thm~\ref{thm:main}.1)} The intersection of perfectly orientable
  graphs and complete multipartite graphs are $K_{2,3}$-free complete
  multipartite graphs (Proposition~\ref{B1T1+T2-free}).

  \textit{(Thm~\ref{thm:main}.2)} The intersection of comparability graphs and
  complete multipartite graphs equals the class of complete multipartite graphs
  (every complete multipartite graph is a comparability graph).

  \textit{(Thm~\ref{thm:main}.3)} The intersection of odd closed strip hom.-free
  graphs and  complete multipartite graphs equal the class of complete
  $3$-partite graphs (Corollary~\ref{K1K2T3-free}).

  \textit{(Thm~\ref{thm:main}.4)} The intersection of d.u.o.\ proper
  circular-arc graphs and  complete multipartite graphs equal the class of
  $\{K_{1,3}, K_{2,2,2}\}$-free complete multipartite graphs
  (Proposition~\ref{B1B2T+T2-free}).

  \textit{(Thm~\ref{thm:main}.5)} The intersection of trivially perfect graphs
  and  complete multipartite graphs are either complete graphs or complete
  graphs minus an edge (Proposition~\ref{B1B3C3T+T2-free}).

  \textit{(Thm~\ref{thm:main}.6)} The intersection of transitive-perfectly
  orientable graphs and  complete multipartite graphs equal the class of
  $\{K_{2,3}, K_{2,2,2}\}$-free complete multipartite graphs
  (Proposition~\ref{B1C3T+T2-free}).

  \textit{(Thm~\ref{thm:main}.7)} The intersection of d.u.o.\ of unicyclic
  graphs and  complete multipartite graphs are either empty graphs, stars, $C_3$
  or $C_4$ (immediate).

  \textit{(Thm~\ref{thm:main}.8)} The intersection of d.u.o.\ of triangle-free
  unicyclic graphs and  complete multipartite graphs are either empty graphs,
  stars or $C_4$ (immediate).

  \textit{(Thm~\ref{thm:main}.9)} The intersection of $3$-colourable
  comparability graphs and  complete multipartite graphs are complete
  $3$-partite graphs (every complete multipartite graph is a comparability
  graph).

  \textit{(Thm~\ref{thm:main}.10)} The intersection of triangle-free graphs and
  complete multipartite graphs are complete bipartite graphs (immediate).

  \textit{(Thm~\ref{thm:main}.11)} The intersection of clusters and complete
  multipartite graphs are complete graphs and empty graphs (immediate).

  \textit{(Thm~\ref{thm:main}.12)} The intersection of d.u.o.\ proper Helly
  circular-arc graphs and  complete multipartite graphs are $\{K_{1,3},
  K_{1,2,2}\}$-free complete  multipartite graphs
  (Proposition~\ref{B1B2C3T+T2-free}).

  \textit{(Thm~\ref{thm:main}.13) The intersection of d.u.o.\ triangle-free
  proper circular-arc graphs} and  complete multipartite graphs is the class
  empty graphs and some finite set (immediate from
  Proposition~\ref{B1B2T+T2-free}).

  \textit{(Thm~\ref{thm:main}.14--\ref{thm:main}.15)} The intersection of
  d.u.o.\ paths and cycles or  d.u.o.\ paths and cycles but no triangles, with
  complete multipartite graphs are either empty graphs and some finite set of
  graphs (immediate).

  \textit{(Thm~\ref{thm:main}.16)} The intersection of d.u.o.\ triangles and
  stars and  complete multipartite graphs is the class of stars and $C_3$.
  (trivial).

  \textit{(Thm~\ref{thm:main}.17--\ref{thm:main}.18)} The intersections of star
  forests and of stars and empty graphs with complete multipartite graphs is the
  class of stars and empty graphs (trivial).

  \textit{(Thm~\ref{thm:main}.19)} The intersection of matchings and isolated
  vertices with  complete multipartite are either empty graphs or $K_2$.
  (trivial).

  \textit{(Thm~\ref{thm:main}.20--\ref{thm:main}.26)} The intersections of
  either empty graphs and $K_2$, bipartite graphs, complete bipartite graphs,
  complete $3$-partite graphs, $K_{2,3}$-free complete multipartite graph,
  complete multipartite graphs or of all graphs, with complete multipartite
  graphs can be trivially described (and are listed above).
\end{proof}

\section{Conclusions}
\label{sec:conclusions}

Algorithm~\ref{alg:master} is a certifying one, i.e., given a graph $G$, it
outputs an $F$-free orientation of $G$ if it has one, or it finds and
obstruction to being a $Forb_e(F)$-graph, but these obstructions live in the
constraint digraph $D^+$, not in $G$. The proofs of Lemma~\ref{lem:hom-strips}
and Theorem~\ref{thm:T3-free}, yield a polynomial time extension of this
algorithm (in the case when $F = \{T_3\}$) that outputs an obstruction that now
lives in $G$; namely it outputs a forbidden homomorphic pre-image $W$ and a
homomorphism $\varphi \colon W \to G$. Various of the reductions to $2$-SAT are
examples of certifying algorithms that exhibit an obstruction that does not
belong to the graph $G$. A technique similar to the reverse engineering in the
proof of Lemma~\ref{lem:hom-strips} could work to find obstructions in $G$ for
other cases.

We listed all families of $Forb_e(F)$-graphs where $F$ consists of non-empty
oriented graphs on three vertices.  Finding nice characterizations of perfectly
orientable graphs and transitive-perfectly orientable graphs remain as open
problems. 

\begin{problem}
Characterize transitive-perfectly orientable graphs.
\end{problem}

We briefly observe the following structural property of transitive-perfectly
orientable graphs.

\begin{proposition}
  Every transitive-perfectly orientable graph $G$ admits a partition into two
  induced chordal graphs.
\end{proposition}

\begin{proof}
Let $G'$ be a $\{B_1,\overrightarrow{C}_3\}$-free orientation of a graph $G$. In
\cite{aboulkerEJC106}, the authors show that any
$\{B_1,\overrightarrow{C}_3\}$-free oriented graph has dichromatic number at
most $2$ (this result is also a consequence of a stronger statement found in
\cite{steinerJGT2022}). Let $U$ and $V$ be the two colour classes in such a
colouring of $G'$. Since $G'$ is $B_1$-free, and $G'[U]$ and $G'[V]$ have no
directed cycles, then the underlying induced subgraphs, $G[U]$ and $G[V]$, are
chordal graphs. The claim follows.
\end{proof}

Theorem~\ref{thm:main} together with Proposition~\ref{prop:somesets}, show that
all classes of $Forb_e(F)$-graphs can be recognized in polynomial time ---
except for transitive-perfectly orientable graphs, whose recognition complexity
remains an open problem.

\begin{theorem}
  Let $F$ be a set of oriented graphs on three vertices. If $F \neq \{B_1,
  \overrightarrow{C}_3\}$ and $F \neq \{B_2, \overrightarrow{C}_3\}$, then it is
  in $P$ to recognize $Forb_e(F)$-graphs.
\end{theorem}

Clearly, a graph $G$ admits a $\{B_1, \overrightarrow{C}_3\}$-free orientation
if and only if it admits a $\{B_2, \overrightarrow{C}_3\}$-free orientation. 

\begin{problem}
  Determine the complexity of deciding if an input graph $G$ admits a
  $\{B_1,\overrightarrow{C}_3\}$-free orientation. Equivalently, determine the
  complexity of recognizing transitive-perfectly orientable graphs.
\end{problem}

As a final conclusion, let us to see how Skrien's work \cite{skrienJGT6} and
this work relate to characterizations through forbidden ordered patterns. An
\textit{ordered pattern} is a graph, $G$, together with a linear ordering of its
vertices. Similar to the procedure followed in \cite{skrienJGT6} and in this
work, one can fix a finite set of ordered patterns, $P$, and characterize those
graphs that admit a $P$-free ordering. For instance, if $P$ is the singleton
$\{(\{1 \le 2\le 3\},\{12,13\})\}$, then a graph $G$ admits a $P$-free ordering
if and only if $G$ is chordal. Our work of Section~\ref{sec:small} is similar
(but not as thorough and complete) to \cite{feuilloleyJDM35}, where Feuilloley
and Habib characterize all families of graphs defined by admitting a $P$-free
ordering for any set of oriented graphs on three vertices, $P$. On the other
hand, the algorithm exhibited in Section~\ref{sec:Algorithm}, was motivated by
\cite{hellESA2014}, where Hell, Mohar and Rafiey propose a master algorithm that
determines if an input graph, $G$, admits a $P$-free ordering, for any fixed set
of ordered patterns on three vertices.

Given a set $F$ of orientations of $P_3$, Skrien studied classes of graphs that
admit an $F$-free acyclic orientation~\cite{skrienJGT6}. So, a possible problem
one may think of, is to extend Skrien's work as we did in this manuscript. Turns
out that this has been indirectly solved in \cite{feuilloleyJDM35} and
\cite{hellESA2014}. Consider an acyclic oriented graph $H'$ with underlying
graph $H$. Denote by $P_{H'}$ the set of all ordered patterns, $(H,\le)$, such
that for any edge $xy\in E(H)$ it holds that $(x,y)\in A(H')$ if and only if
$x\le y$.   Given a set of acyclic oriented graphs, $F$, we denote by $P_F$ the
union of all sets, $P_{H'}$, where $H'\in F$. It is not hard to observe that the
following observation holds.

\begin{observation}
\label{obs:final}
  Let $F$ be a set of acyclic oriented graphs, and let $P_F$ be the set of
  ordered patterns defined above. A graph $G$ admits an acyclic $F$-free
  orientation if and only if it admits a $P_F$-free ordering.
\end{observation}

In light of this observation, if $F$ is a set of acyclic oriented graphs on
three vertices, then the class of graphs that admit an $F$-free acyclic
orientation is characterized in \cite{feuilloleyJDM35}. Moreover, due to the
algorithm of Hell et al.\ \cite{hellESA2014}, the following statement follows.

\begin{proposition}
  Let $F$ be any set of oriented graphs on three vertices. Recognizing if an
  input graph admits an $F$-free acyclic orientation can be done in polynomial
  time.
\end{proposition}

\begin{proof}
  It follows directly from Observation~\ref{obs:final} and Corollary 1 in
  \cite{hellESA2014}.
\end{proof}


\end{document}